\newcommand{\sign}{\mathop{\rm sign}}
\newcommand*{\mailto}[1]{\href{mailto:#1}{\nolinkurl{#1}}}
\DeclareMathOperator{\id}{Id}
\DeclareMathOperator{\meas}{meas}
\newcommand{\dott}{\, \cdot\,}
\newcommand{\quot}{{\F/\Gr}}
\newcommand{\Gr}{G}
\newcommand{\I}{\mathrm{i}}
\newcommand{\D}{\ensuremath{\mathcal{D}}}
\newcommand{\G}{\ensuremath{\mathcal{G}}}
\newcommand{\F}{\ensuremath{\mathcal{F}}}
\newcommand{\inv}{{^{-1}}}
\newcommand{\abs}[1]{\left\vert#1\right\vert}
\newcommand{\Real}{\mathbb R}
\newcommand{\norm}[1]{\left\Vert#1\right\Vert}
\DeclareMathOperator{\sgn}{sgn}
\newcommand{\nn}{\nonumber}
\newtheorem{theorem}{Theorem}[section]
\newtheorem{lemma}[theorem]{Lemma}
\newtheorem{definition}[theorem]{Definition}
\numberwithin{equation}{section}
\begin{document}

\title[Global solutions with nonvanishing
asymptotics]{Global conservative solutions of the
  Camassa--Holm equation for initial data with
  nonvanishing asymptotics}

\author[K. Grunert]{Katrin Grunert}
\address{Faculty of Mathematics\\ University of
  Vienna\\ Nordbergstrasse 15\\ A-1090 Wien\\
  Austria}
\email{\mailto{katrin.grunert@univie.ac.at}}
\urladdr{\url{http://www.mat.univie.ac.at/~grunert/}}

\author[H. Holden]{Helge Holden}
\address{Department of Mathematical Sciences\\
  Norwegian University of Science and Technology\\
  7491 Trondheim\\ Norway\\ {\rm and} Centre of
  Mathematics for Applications\\ University of Oslo\\
  NO-0316 Oslo\\ Norway}
\email{\mailto{holden@math.ntnu.no}}
\urladdr{\url{http://www.math.ntnu.no/~holden/}}

\author[X. Raynaud]{Xavier Raynaud}
\address{Centre of Mathematics for Applications\\
  University of Oslo\\ NO-0316 Oslo\\ Norway}
\email{\mailto{xavierra@cma.uio.no}}
\urladdr{\url{http://folk.uio.no/xavierra/}}

\date{\today} 
\thanks{Research supported in part by the
  Research Council of Norway under projects Wavemaker, NoPiMa, and by the Austrian Science Fund (FWF) under Grant No.~Y330.}  
\subjclass[2010]{Primary:
  35Q53, 35B35; Secondary: 35Q20}
\keywords{Camassa--Holm equation, conservative solutions, nonvanishing asymptotics}

\begin{abstract}
  We study global conservative solutions of the
  Cauchy problem for the Camassa--Holm equation
  $u_t-u_{txx}+\kappa
  u_x+3uu_x-2u_xu_{xx}-uu_{xxx}=0$ with
  nonvanishing and distinct spatial asymptotics.
\end{abstract}
\maketitle

\section{Introduction}

The Cauchy problem for the Camassa--Holm (CH)
equation \cite{CH, CHH},
\begin{equation}
  \label{eq:kappaCH}
  u_t-u_{txx}+\kappa u_x+3uu_x-2u_xu_{xx}-uu_{xxx}=0,
\end{equation}
where $\kappa\in\Real$ is a constant, has
attracted much attention due to the fact that it
serves as a model for shallow water waves
\cite{cla} and its rich mathematical
structure. For example, it has a bi-Hamiltonian
structure, 
infinitely many conserved quantities 
and blow-up phenomena have been studied. As these
properties play no role in the present approach,
we refer to \cite{HR} for references to papers
that discuss these properties.

In particular, global conservative solutions have
been constructed in the periodic case \cite{HR3}
and on the real line in the case of initial data
with the same vanishing asymptotics at minus and
plus infinity \cite{BC,HR} (for $\kappa=0$) and
\cite{HR2} (for $\kappa\neq0$).  Furthermore, a
Lipschitz metric has been derived for the
Camassa--Holm equation \cite{GHR,GHR2}.

Here we focus on the construction of a semigroup
of global conservative solutions on the real line
for initial data with (in general) different
asymptotics at minus and plus infinity.  The
approach used here is similar to the one used in
\cite{HR} for the Camassa--Holm equation in the
case of vanishing asymptotics (when $u\in
H^1(\Real)$). It also resembles a recent study of
the Hunter--Saxton equation in \cite{BHR}, and
indeed we here combine the two approaches. In
\cite{bencockar:06}, the authors proves in the
dissipative case the existence of $H^1$
perturbations around a given solution with
nonvanishing asymptotics. In this article, by
solving the Cauchy problem, we prove that such
solutions exist in the conservative case.

More precisely, we consider the initial problem
for \eqref{eq:kappaCH} with initial data $u_0\in
H_\infty(\Real)$, which means that $u_0$ can be
written as
\begin{equation}
 u_0(x)=\bar u_0(x)+c_-\chi(-x)+c_+\chi(x),
\end{equation}
for some constants $c_\pm\in\Real$, where $\bar
u_0\in H^1(\Real)$ and $\chi$ denotes a smooth
partition function, which satisfies $\chi(x)=0$
for $x\leq 0$, $\chi(x)=1$ for $x\geq 1$ and
$\chi'(x)\geq 0$ for all $x\in\Real$.

In \cite{L1, L2}, traveling waves solutions of the
CH equation have been characterized and
depicted. The solutions are obtained by glueing together simpler solutions. 
In particular, Lenells constructs solutions with
distinct asymptotics at plus and minus infinity,
see Figure \ref{fig:tw} for an example.
\begin{figure}[h]
  \centering
  \includegraphics[width=10cm]{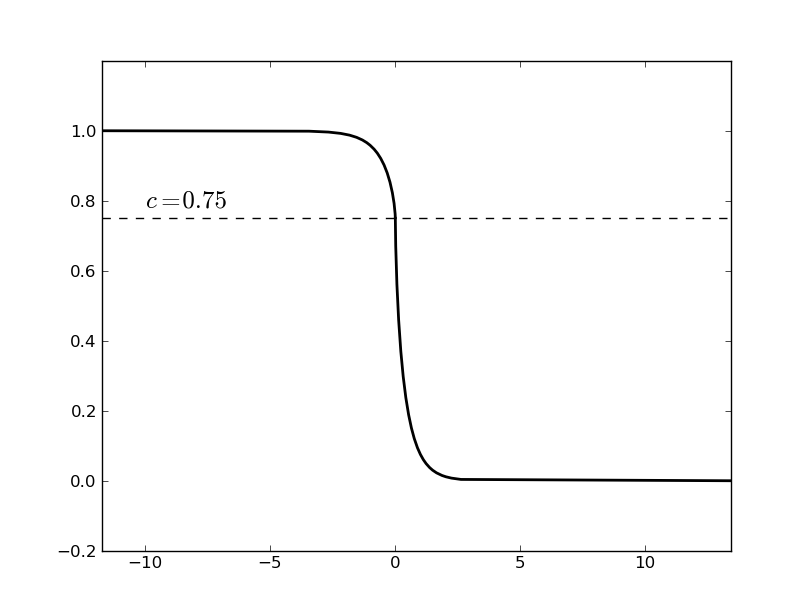}
  \caption{Traveling wave $u(t,x)=\phi(x-ct)$
    obtained by gluing together two \textit{cuspons} at the
    point $(0,c)$. For the picture here, we use
    $c=\frac34$ and $k=-\frac78$ so that the
    gluing indeed results in a weak solution (see
    \cite{L1}). Note that $\phi_x\in L^2(\Real)$ but
    $\phi$ is not Lipschitz as
    $\lim_{x\to0}\phi_x(x)=-\infty$.}
  \label{fig:tw}
\end{figure}

Without loss of generality, we can consider the
case $\kappa=0$ (see Section
\ref{sec:euler}). Then \eqref{eq:kappaCH} can be
rewritten as the following system
\begin{subequations}\label{eq:intro1}
\begin{align}
 u_t+uu_x+P_x&=0,\label{eq:intro1A} \\
 P-P_{xx}&=u^2+\frac{1}{2}u_x^2. \label{eq:intro1B}
\end{align}
\end{subequations}
The last equation yields, after using the Green
function,
\begin{equation*}
  P(x)=\frac12\int_\Real e^{-\abs{x-z}}(u^2+\frac12u_x^2)(z)\,dz
\end{equation*}
and we can see that $P$ is well-defined when $u\in
H_\infty(\Real)$. We  show that the asymptotic
values $c_-$ and $c_+$ for $u(t,x)$ at infinity are
in fact constant in time (see
Section \ref{sec:euler}). This may seem a little
surprising since the Camassa--Holm equation
with vanishing asymptotics has
infinite speed of propagation \cite{cons:05}, and therefore one
might think that the solution would approach a common 
asymptotic, e.g., the mean value, at plus and minus infinity.

The solutions of the Camassa--Holm equation may
experience wave breaking (see, e.g.,
\cite{cons:98b} and references therein). The
continuation of the solution past wave breaking is
highly nontrivial, and allows for several distinct
continuations. Two prominent classes of solutions
are denoted as conservative and dissipative
solutions, see, e.g.,
\cite{BC,BreCons:05a,HR,HRa}.

The aim of this article is to construct a
semigroup of conservative solutions on the line
for nonvanishing asymptotics. In the case of
vanishing asymptotics, conservative solutions
refer to the solutions for which the $H^1(\Real)$ norm is
preserved, for almost every time. Here, it does
not make sense to consider the $H^1(\Real)$ norm of the
solution. By conservative solutions, we mean weak
solutions to the equation which in addition
satisfy the conservation law
\begin{equation*}
  (u^2+u_x^2)_t+(u(u^2+u_x^2))_x=(u^3-2Pu)_x
\end{equation*}
in the sense of  distributions (see Definition
\eqref{eq:defweakconssol} for the precise
definition). 

The continuity of the semigroup is obtained with
respect of a new metric that we introduce. This
metric depends on the choice of the partition
function $\chi$. However, in Section~\ref{top1},
it is shown that different choices of partition
functions $\chi$ lead to the same topology.

\section{Eulerian coordinates} \label{sec:euler}

We consider the Cauchy problem for the Camassa--Holm equation with arbitrary 
$\kappa\in\Real$ given by
\begin{equation}
\label{eq:chfull}
u_t-u_{txx}+\kappa u_x+3uu_x-2u_xu_{xx}-uu_{xxx}=0, \quad u|_{t=0}=u_0.
\end{equation}
We are interested in global solutions for initial data with nonvanishing
limits at infinity, that is, 
\begin{equation}
  \label{eq:nonvanlim}
  \lim_{x\to-\infty} u_0(x)=u_{-\infty}\quad\text{ and }\quad\lim_{x\to\infty} u_0(x)=u_{\infty}.
\end{equation}
To be more specific, we assume that $u_0(x)$ can be rewritten as 
\begin{equation}
  \label{eq:u0withas}
 u_0(x)=\bar u_0(x)+u_{-\infty}\chi(-x)+u_\infty \chi(x),
\end{equation}
with $\bar u_0\in H^1(\Real)$ and $\chi$ a smooth
partition function, which has support in
$[0,\infty)$, satisfies $\chi(x)=1$ for $x\geq 1$
and $\chi'(x)\geq 0$ for $x\in\Real$. Assuming
that $u(t,x)$ satisfies \eqref{eq:kappaCH}, then
the function $v(t,x)=u(t,x-\kappa t/2)+\kappa/2$
satisfies \eqref{eq:kappaCH} with $\kappa=0$ and
hence we can without loss of generality assume
that $\kappa=0$, because the framework presented
here allows for nonvanishing asymptotics.

Let us introduce the mapping $I_{\chi}$ from
$H^1(\Real)\times\Real^2$ into
$H_{\text{loc}}^1(\Real)$ given by
\begin{equation*}
  I_{\chi}(\bar u,c_-,c_+)=\bar u(x)+c_-\chi(-x)+c_+\chi(x)
\end{equation*}
for any $(\bar u,c_-,c_+)\in
H^1(\Real)\times\Real^2$. We denote by
$H_{\infty}(\Real)$ the image of
$H^1(\Real)\times\Real^2$ by $I_\chi$, that is,
$H_{\infty}(\Real)=I_{\chi}(H^1(\Real)\times\Real^2)$. Then,
$u_0$ as given by \eqref{eq:u0withas} belongs by
definition to $H_\infty(\Real)$. Since $I_\chi$ is
linear, $H_{\infty}(\Real)$ is a vector space. The
mapping $I_\chi$ is injective. We equip
$H_\infty(\Real)$ with the norm
\begin{equation}
  \label{eq:normHinfty}
  \norm{u}_{H_\infty(\Real)}=\norm{\bar u}_{H^1(\Real)}+\abs{c_-}+\abs{c_+}
\end{equation}
where $u=I_\chi(\bar u,c_-,c_+)$.  Then,
$H_\infty(\Real)$ is a Banach space. Given another
partition function $\tilde\chi$, we define the
mapping $(\tilde{\bar u},\tilde c_-,\tilde
c_+)=\Psi(\bar u,c_-,c_+)$ from
$H^1(\Real)\times\Real^2$ to
$H^1(\Real)\times\Real^2$ as $\tilde c_-=c_-$,
$\tilde c_+=c_+$ and
\begin{equation}
  \label{eq:defPsi}
  \tilde{\bar u}(x)=\bar u(x)+c_-(\chi(-x)-\tilde\chi(-x))+c_+(\chi(x)-\tilde\chi(x)).
\end{equation}
The linear mapping $\Psi$ is a continuous
bijection. Since
\begin{equation*}
  I_{\chi}=I_{\tilde\chi}\circ \Psi,
\end{equation*}
we can see that the definition of the Banach space
$H_\infty(\Real)$ does not depend on the choice of
the partition function $\chi$. The norm defined by
\eqref{eq:normHinfty} for different partition
functions $\chi$ are all equivalent.

If $u(t,x)$ is a solution of the Camassa--Holm
equation, then, for any constants $\alpha$ and
$\beta$, we easily find that
\begin{equation}
  \label{eq:rescale}
  v(t,x)=\alpha u(\alpha t, x-\beta t)+\beta
\end{equation}
is also a solution with $\kappa$ replaced by $\alpha\kappa-2\beta$. The case $u_\infty= u_{-\infty}$, can be reduced to the standard case of vanishing asymptotics at infinity by choosing $\alpha=1$ and $\beta=-u_\infty$.
Furthermore, in the case when $u_\infty\neq u_{-\infty}$, it is no loss of generality to only consider initial conditions which satisfy the following conditions
\begin{equation}
 \label{eq:limvalue2}
  \lim_{x\to-\infty}u_0(x)=0\quad \text{and }\quad \lim_{x\to\infty}u_0(x)=c,
\end{equation}
where $c$ denotes some constant. 
Especially, if $u_0$ is an initial condition which satisfies 
\eqref{eq:nonvanlim}, then take 
$\alpha=1$ and $\beta=-u_{-\infty}$ in 
\eqref{eq:rescale} and we obtain an initial condition $v_0$ which satisfies condition \eqref{eq:limvalue2}.
Accordingly, we introduce the subspace $H_{0,\infty}(\Real)$ of $H_\infty(\Real)$ as 
\begin{equation*}
  H_{0,\infty}(\Real)=I_{\chi}(H^1(\Real)\times\{0\}\times\Real).
\end{equation*}
As we shall see next this subspace is preserved by
the Camassa--Holm equation. This is the main
motivation for considering this space beside of
the fact that the arguments simplify.

In what follows we will restrict ourselves to the
case $\kappa=0$, as the case $\kappa\not =0$ can
be treated using the same ideas and
techniques. For the case $\kappa=0$ the governing
equations read\footnote{For $\kappa$ nonzero
  \eqref{eq:p} is simply replaced by
  $P-P_{xx}=u^2+\kappa u +\frac{1}{2}u_x^2$.}
\begin{subequations}
\label{eq:chf}
\begin{align}
\label{eq:ch}
&u_t+uu_x+P_x=0,\\
\label{eq:p}
&P-P_{xx}=u^2+\frac{1}{2}u_x^2.
\end{align}
\end{subequations}

Let us assume that $\lim_{x\to\infty}u(t,x)=c(t)$
exists. From \eqref {eq:p} we obtain the following
representation for $P$, under the assumption that
$u\in H_{0,\infty}(\Real)$,
\begin{equation}\label{rep:p}
 P(x)=c^2\chi^2(x)+\frac{1}{2}\int_\Real e^{-\vert x-y\vert } (2c\chi\bar u +\bar u^2+\frac{1}{2}u_x^2+2c^2\chi^{\prime 2}+2c^2\chi\chi^{\prime\prime})(y) dy.
\end{equation}
It follows that
\begin{equation*}
  P_x(x)=2c^2\chi'(x)\chi(x)-\frac12\int_\Real\sgn{(x-y)}e^{-\abs{x-y}}(2c\chi\bar u +\bar u^2+\frac{1}{2}u_x^2+2c^2\chi^{\prime 2}+2c^2\chi\chi^{\prime\prime})(y) dy
\end{equation*}
and $\lim_{x\to\infty}P_x=0$. Thus, we formally
obtain that $c'(t)=\lim_{x\to\infty}u_t(t,x)=0$
and the limit at infinity of $u$ is constant in
time.  Indeed the solutions we are going to
construct satisfy this property.


\section{Lagrangian variables}
\label{sec:lagvar}

The aim of this section is to rewrite the
Camassa--Holm equation as a system of ordinary
differential equations, which describes solutions
in Lagrangian coordinates. Let $V$ be the Banach
space
\begin{equation*}
  V=\{f\in C_b(\Real) \mid  f_\xi\in L^2(\Real)\},
\end{equation*}
where  $C_b(\Real)=C(\Real)\cap L^\infty(\Real)$, equipped with the norm 
\begin{equation}
 \norm{f}_V=\norm{f}_{L^\infty}+\norm{f_\xi}_{L^2},
\end{equation}
for any $f\in V$.  Then we define
$y_t(t,\xi)=u(t,y(t,\xi))$, which can be rewritten
as $y(\xi)=\zeta(\xi)+\xi$, where $\zeta$ belongs
to $V$. Furthermore we set
$U(t,\xi)=u(t,y(t, \xi))$, which can be decomposed
as
\begin{equation*}
  U=\bar U+c\chi\circ y 
\end{equation*}
where $\bar U\in H^1(\Real)$ and $c\in\Real$. We
define $h \in L^2(\Real)$ formally as
\begin{equation*}
  h(t,\xi)=u_x^2(t,y(t,\xi))y_\xi(t,\xi)
\end{equation*}
so that $u_x^2(t,x)\,dx=y_\#(h(t,\xi)\,d\xi)$. Here $y_\#$ denotes the 
push-forward.\footnote{The push-forward of a measure $\nu$ by a measurable function $f$ is the measure $f_\#\nu$ defined as $f_\#\nu(B)=\nu(f^{-1}(B))$ for any Borel set $B$.} We
have
\begin{equation}
  \label{eq:energyxi}
  y_\xi h=U_\xi^2.
\end{equation}
Next we derive the equivalent system for the independent variables $\zeta$, $U$, and $h$. Therefore set $P(t,\xi)=P(t,y(t,\xi))$, where $P(t,x)$ is the solution of \eqref{eq:p} and define $Q(t,\xi)=P_x(t,y(t,\xi))$, then we have using \eqref{eq:ch}
\begin{align*}
  y_t&=U,\\
  U_t&=-Q.
\end{align*}
Let us compute $h_t$. Assuming that the solution
is smooth, \eqref{eq:chf} yields
\begin{equation}
  \label{eq:trenerg}
  (u_x^2)_t+(uu_x^2)_x=2(u^2u_x-Pu_x).
\end{equation}
By the definition of $y$ as the characteristic
function, it follows that
\begin{equation}
  \label{eq:derh}
  \frac{\partial}{\partial t}(u_x^2\circ yy_\xi)=((u_x^2)_t+(uu_x^2)_x)\circ yy_\xi.
\end{equation}
Hence, from \eqref{eq:trenerg}, we get
\begin{equation}
  \label{eq:defht}
  h_t=2((u^2-P)\circ y)(u_x\circ y)y_\xi=2(U^2-P)U_\xi.
\end{equation}
Thus, we consider the system
\begin{subequations}
  \label{eq:govsys}
  \begin{align}
    \label{eq:govsys1}
    y_t&=U,\\
    \label{eq:govsys2}
    U_t&=-Q,\\
    \label{eq:govsys3}
    h_t&=2(U^2-P)U_\xi.
  \end{align}
\end{subequations}
After studying the functions $P$ and $Q$ we will prove the local existence of solutions to
\eqref{eq:govsys} in
\begin{equation*}
  E:=V\times H_{0,\infty}(\Real)\times L^2(\Real)
\end{equation*}
by using a standard contraction argument. 
The norm
of $E$ is given in terms of a partition function
$\chi$. Then $E$ is in isometry with
\begin{equation*}
  \bar E=V\times H^1(\Real)\times\Real\times L^2(\Real).
\end{equation*}
We have
\begin{equation*}
  \norm{(\zeta,U,h)}_{E}=\norm{(\zeta,I_\chi^{-1}(U),h)}_{\bar E}.
\end{equation*}
However, as noted earlier, all partition functions
give rise to equivalent norms. For convenience, we
will often abuse the notations and denote by the
same $X$ the two elements $(\zeta,\bar U,c,h)$ and
$(y,U,h)$ where, by definition, $U=\bar
U+c\chi\circ y$ and $y(\xi)=\zeta(\xi)+\xi$.

\begin{lemma}
  \label{lem:PQ}
  For any $X=(\zeta,U,h)$ in $E$, we define the
  maps $\mathcal{Q}$ and $\mathcal{P}$ as
  $\mathcal{Q}(X)=Q$ and $\mathcal{P}(X)=P$, where
  $P$ and $Q$ are given by 
\begin{equation}\label{eq:defP}
 P(\xi)=\frac14\int_\Real
  e^{-\abs{y(\xi)-y(\eta)}}((2\bar U^2+4c\bar
  U\chi\circ y)y_\xi+h)(\eta)\,d\eta+c^2g\circ y(\xi),
\end{equation}
 and
 \begin{equation}\label{eq:defQ}
   Q(\xi)=-\frac14\int_\Real \sign(\xi-\eta)e^{-\abs{y(\xi)-y(\eta)}}((2\bar
  U^2+4c\bar U\chi\circ y)y_\xi+h)(\eta)\,d\eta+c^2 g'\circ y(\xi),
 \end{equation}
where 
\begin{equation}\label{eq:defG}
 g(x)=\chi^2(x)+\frac12\int_\Real e^{-\abs{x-z}}(2\chi^{\prime 2}+2\chi\chi^{\prime\prime})(z) \,dz.
\end{equation}
Then, $X\mapsto\mathcal{P}-U^2$ and
$X\mapsto\mathcal{Q}$ are locally Lipschitz maps, i.e.,
Lipschitz on bounded sets, from $E$ to
$H^1(\Real)$. Moreover we have
  \begin{equation}\label{eq:derQ}
    Q_\xi= -\frac12 h-(U^2-P) y_\xi,
  \end{equation}
  \begin{equation}\label{eq:derP}
    P_\xi=Q(1+\zeta_\xi).
  \end{equation}
\end{lemma}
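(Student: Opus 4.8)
The plan is to establish the two differential identities \eqref{eq:derQ} and \eqref{eq:derP} first, since these are essentially explicit computations, and then to use them together with the integral representations \eqref{eq:defP}--\eqref{eq:defQ} to prove the local Lipschitz property. To obtain \eqref{eq:derQ} and \eqref{eq:derP}, I would differentiate the expressions for $P$ and $Q$ with respect to $\xi$. In both integrals the $\xi$-dependence enters through $y(\xi)$ in the kernel $e^{-\abs{y(\xi)-y(\eta)}}$, whose $\xi$-derivative produces a factor $-\sign(\xi-\eta)e^{-\abs{y(\xi)-y(\eta)}}y_\xi(\xi)=-\sign(\xi-\eta)e^{-\abs{y(\xi)-y(\eta)}}(1+\zeta_\xi)$. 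Here one must be careful about the jump of $\sign$ at $\xi=\eta$: differentiating $Q$ produces an additional boundary term from the Dirac mass when the $\sign$ is itself differentiated, and this term yields precisely the $-\frac12 h-(U^2-P)y_\xi$ contribution in \eqref{eq:derQ} once one uses \eqref{eq:energyxi} to identify $U_\xi^2=y_\xi h$ and substitutes the integrand evaluated at $\eta=\xi$. For \eqref{eq:derP} the computation is cleaner: differentiating $P$ brings down exactly the $\sign$ factor that reconstructs $Q$, multiplied by $y_\xi=1+\zeta_\xi$, and the $c^2 g\circ y$ term differentiates to $c^2 g'\circ y\,(1+\zeta_\xi)$, matching the $c^2 g'\circ y$ term inside $Q$. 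The function $g$ is chosen precisely so that these asymptotic boundary pieces are absorbed correctly.

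For the local Lipschitz statement, the key reorganization is to write $\mathcal{P}-U^2$ rather than $\mathcal{P}$ alone, because $P$ and $U^2$ individually fail to lie in $H^1(\Real)$ (they have nonvanishing asymptotics governed by $c^2$), whereas their difference does. Concretely, I expect $P-U^2=P-(\bar U+c\chi\circ y)^2$, and the non-$L^2$ tails coming from $c^2\chi^2\circ y$ in $U^2$ should cancel against the $c^2 g\circ y$ term and the slowly decaying part of the integral in \eqref{eq:defP}, leaving a function in $H^1(\Real)$. This cancellation is where the specific form of $g$ in \eqref{eq:defG} is used: $g$ is built from $\chi^2$ plus a convolution of $\chi'^2+\chi\chi''$, which are exactly the correction terms appearing in the Eulerian representation \eqref{rep:p}.

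Once the correct combination is isolated, the Lipschitz estimates proceed by treating each map as a composition of operations that are already known to be locally Lipschitz on the relevant spaces. The building blocks are: the convolution operator $f\mapsto\frac12\int e^{-\abs{x-\cdot}}f\,d\eta$ maps $L^1$ into $L^\infty$ and $L^2$ into $H^1$ with bounded operator norm; the substitution $y_\xi=1+\zeta_\xi$ with $\zeta\in V$; bilinear terms like $\bar U^2 y_\xi$, $\bar U\chi\circ y\,y_\xi$, and the composition $g\circ y$, $g'\circ y$. On a bounded set in $E$ one controls $\norm{\zeta}_V$, $\norm{\bar U}_{H^1}$, $\abs c$, and $\norm h_{L^2}$ uniformly, so products and compositions of these quantities are Lipschitz by the usual estimates (using that $\chi,\chi',\chi''$ and $g,g'$ are bounded smooth functions and that $H^1(\Real)$ is a Banach algebra under the relevant embeddings). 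The differences $P(X_1)-P(X_2)$ then telescope into finitely many terms, each linear in one slot of a multilinear expression and bounded in the others, giving the desired Lipschitz bound.

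The main obstacle I anticipate is the careful bookkeeping of the asymptotic cancellations — verifying rigorously that the combination $\mathcal{P}-U^2$ (and $\mathcal{Q}$, which already has $\lim_{\xi\to\pm\infty}Q=0$ built in through the odd $\sign$ kernel) really lands in $H^1(\Real)$ rather than merely $H^1_{\mathrm{loc}}$. This requires splitting the kernel $e^{-\abs{y(\xi)-y(\eta)}}$ according to the regions where $\chi\circ y$ equals $0$ or $1$, and checking that the polynomially-in-$c$ non-decaying contributions match the explicit $c^2 g\circ y$ correction exactly; the identity \eqref{eq:derP}, once established, is a useful consistency check since it forces the asymptotic behavior of $P_\xi$ to be governed by that of $Q$. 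The differentiation identities themselves are routine modulo the $\sign$-jump term, but that boundary term is the one place where a sign error or a missing factor of $\frac12$ would propagate, so I would verify it by evaluating the integrand symmetrically at $\eta=\xi^{\pm}$.
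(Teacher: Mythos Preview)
Your overall architecture is right --- isolate the differential identities by direct differentiation, then prove local Lipschitz continuity by decomposing the operators into elementary building blocks --- and your discussion of why one must work with $\mathcal{P}-U^2$ rather than $\mathcal{P}$ is spot on and actually more explicit than what the paper writes. The identities \eqref{eq:derQ}--\eqref{eq:derP} are indeed routine computations along the lines you sketch.

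There is, however, one genuine gap in the Lipschitz argument. You invoke ``the convolution operator $f\mapsto\frac12\int e^{-\abs{x-\cdot}}f\,d\eta$ maps $L^2$ into $H^1$'' as a building block, but the kernel appearing in \eqref{eq:defP}--\eqref{eq:defQ} is $e^{-\abs{y(\xi)-y(\eta)}}$, which is \emph{not} a convolution kernel in the $\xi$-variable: it depends on $\zeta$ through $y=\id+\zeta$, and $\zeta$ is part of the unknown $X$. So you cannot directly apply Young or Fourier estimates to this operator, nor is it clear how a Lipschitz bound in $X$ would fall out, since varying $X$ changes the kernel itself. The paper's device to get around this is to factorize, for $\xi>\eta$ say,
\[
e^{-\abs{y(\xi)-y(\eta)}}=e^{-(\xi-\eta)-(\zeta(\xi)-\zeta(\eta))}=e^{-\zeta(\xi)}\,e^{-\abs{\xi-\eta}}\,e^{\zeta(\eta)},
\]
which recasts the integral as: multiply the integrand by $e^{\zeta(\eta)}$ (locally Lipschitz $E\to L^2$, since $\zeta\in L^\infty$), convolve with the \emph{fixed} one-sided kernel $\chi_{\{\xi>0\}}e^{-\xi}$ (bounded $L^2\to H^1$, by Fourier), then multiply by $e^{-\zeta(\xi)}$ (locally Lipschitz $E\to V$). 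The same trick is reused to handle the $c^2 g'\circ y$ term, whose Lipschitz dependence on $y$ is otherwise not obvious. Once you insert this factorization, the rest of your plan --- telescoping differences, using that $H^1$ and $V$ interact well under products --- goes through exactly as the paper does it.
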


\begin{proof}
  The expression \eqref{eq:defP} is obtained from
  \eqref{eq:p} after a change of variables to
  Lagrangian variables. From \eqref{eq:p}, we get
\begin{equation*}
  P-P_{xx}=c^2\chi^2+2c\chi\bar u+\bar u^2+\frac{1}{2}u_x^2,
\end{equation*}
which yields, after applying the Helmholtz
operator,
\begin{equation}
  \label{eq:intdefP}
  P(x)= c^2g(x)+\frac{1}{2}\int_\Real
  e^{-\abs{x-z}}(2c\chi\bar u+\bar
  u^2+\frac{1}{2}u_x^2)(y)dy,
\end{equation}
where we define $g$ as the solution of
$g-g^{\prime\prime}=\chi^2$. Since
$(g-\chi^2)-(g-\chi^2)_{xx}=2\chi^{\prime\prime}\chi+2\chi^{\prime
  2}$, after applying the Helmholtz operator, we get
\begin{equation}
  \label{eq:defgint}
  g-\chi^2=\frac12\int_\Real e^{-\abs{x-z}}(2\chi^{\prime 2}+2\chi\chi^{\prime\prime})(z) \,dz
\end{equation}
and we recover the definition \eqref{eq:defG}.
The integral term on the right-hand side in
\eqref{eq:defgint} belongs to $H^1(\Real)$, and
thus it follows that $\lim_{x\to -\infty}g(x)=0$
and $\lim_{x\to\infty} g(x)=1$. Moreover, since we
can also write $g(x)=\frac12\int_\Real
e^{-\abs{z}}\chi^2(x-z)\,dz$, we have
\begin{equation}\label{eq:gprime}
  g'(x)=\frac12\int_\Real e^{-\abs{x-z}}2\chi'(z)\chi(z)\,dz
\end{equation}
so that $g'\geq 0$, as $\chi'\geq0$. Thus,
\begin{equation*}
  \norm{g}_{L^\infty}=\lim_{x\to\infty} g(x)=1.
\end{equation*}
Defining now $P(t,\xi)=P(t,y(t,\xi))$, then
\eqref{eq:intdefP} yields \eqref{eq:defP}, after
changing variable and using
\eqref{eq:energyxi}. Analogously one explains the
definition \eqref{eq:defQ}.

Next we prove that $\mathcal{Q}$ is locally
Lipschitz from $E$ to $H^1(\Real)$. We rewrite
$\mathcal{Q}$ as
  \begin{align*}
    \mathcal{Q}(X)(\xi)
    & = -\frac{e^{-\zeta(\xi)}}{4}\int_{-\infty}^\xi e^{-\vert \xi-\eta\vert} e^{\zeta(\eta)}[(2\bar U^2+4c\bar U\chi\circ y)y_\xi+h]d\eta\\ \nn 
    & \quad + \frac{e^{\zeta(\xi)}}{4}\int_\xi^\infty e^{-\vert \xi-\eta\vert}e^{-\zeta(\eta)}[(2\bar U^2+4c\bar U\chi\circ y)y_\xi+h]d\eta\\ \nn 
    & \quad + c^2g'\circ y\\ \nn
    & = \mathcal{Q}_1+\mathcal{Q}_2+c^2g'\circ y.
  \end{align*}
  Let $f(\xi)=\chi_{\xi>0}(\xi)e^{-\xi}$ and $A$ be the map defined by $A\colon v\mapsto f\star v$. Then $\mathcal{Q}_1$ can be rewritten as 
  \begin{equation}\label{eq:Q1}
    \mathcal{Q}_1(X)(\xi)=-\frac{e^{-\zeta(\xi)}}{4}A\circ R(\zeta, U, h)(\xi), 
  \end{equation}
  where $R$ is the operator from $E$ to $L^2(\Real)$ given by 
  \begin{equation}\label{eq:R}
    R(\zeta, U,h)=e^\zeta\Big( (2\bar U^2+4c\bar U\chi\circ y)y_\xi+h\Big).
  \end{equation}
  The Fourier transform of $f$ can be easily
  computed, and we obtain
  \begin{equation}\label{Fouf}
    \hat f(\eta)=\int_\Real f(\xi)e^{-2\pi\I\eta\xi}d\xi=\frac{1}{1+2\pi\I\eta}.
  \end{equation}
  The $H^1(\Real)$-norm can be expressed in terms of the Fourier transform as follows
  \begin{align*}
    \norm{f\star v}_{H^1}& =\norm{(1+\eta^2)^{\frac{1}{2}}\hat{f\star v}}_{L^2}\\ \nn 
    & = \norm{(1+\eta^2)^{\frac{1}{2}}\hat f \hat v}_{L^2}\\ \nn
    & \leq C\norm{\hat v}_{L^2}\\ \nn
    & =C\norm{v}_{L^2},
  \end{align*}
  for some constant $C$. Hence $A:L^2(\Real)\to
  H^1(\Real)$ is continuous.  Let us prove that
  $\mathcal{Q}_1$ is locally Lipschitz from $E$ to
  $H^1(\Real)$. It is not hard to prove that $R$
  is locally Lipschitz from $E$ to $L^2(\Real)$,
  by applying
  \begin{equation*}
    \vert \chi\circ y_1(\xi)-\chi\circ y_2(\xi)\vert =\vert \int_{y_1(\xi)}^{y_2(\xi)}\chi'(x)dx\vert \leq \norm{\chi'}_{L^\infty}\vert y_2(\xi)-y_1(\xi)\vert,
  \end{equation*}
  as the following estimate shows
  \begin{align}\nn
    & \norm{e^{\zeta_1} [(2\bar U^2_1 +4c_1\bar
      U_1\chi\circ
      y_1)y_{1,\xi}+h_1]-e^{\zeta_2}[(2\bar
      U^2_2+4c_2\bar U_2\chi\circ
      y_2)y_{2,\xi}+h_2]}_{L^2}\\ \nn &\leq
    \norm{(e^{\zeta_1}-e^{\zeta_2})[(2\bar
      U_1^2+4c_1\bar U_1\chi\circ
      y_1)y_{1,\xi}+h_1]}_{L^2}\\ \nn & \quad
    +\norm{e^{\zeta_2}[(2\bar U^2_1+4c_1\bar
      U_1\chi\circ y_1)y_{1,\xi}-(2\bar
      U^2_2+4c_2\bar U_2\chi\circ
      y_2)y_{2,\xi}+h_1-h_2]}_{L^2}\\
    \nn & \leq
    \norm{e^{\zeta_1}-e^{\zeta_2}}_{L^\infty}\norm{
      (2\bar U^2_1+4c_1\bar U_1\chi\circ
      y_1)y_{1,\xi}+h_1}_{L^2}\\ \nn & \quad
    +\norm{e^{\zeta_2}}_{L^\infty} \norm{(2\bar
      U^2_1+4c_1\bar U_1\chi\circ
      y_1)y_{1,\xi}-(2\bar U^2_2+4c_2\bar
      U_2\chi\circ y_2)y_{2,\xi}+h_1-h_2}_{L^2}\\
    \nn & \leq
    e^{\norm{\zeta_1}_{L^\infty}+\norm{\zeta_2}_{L^\infty}}\norm{\zeta_1-\zeta_2}_{L^\infty}\norm{(2\bar
      U_1^2+4c_1\bar U_1\chi\circ
      y_1)y_{1,\xi}+h_1}_{L^2}\\ \nn & \quad +
    e^{\norm{\zeta_2}_{L^\infty}}(\norm{h_1-h_2}_{L^2}+4\norm{c_1\bar
      U_1\chi\circ y_1-c_2\bar U_2\chi\circ
      y_2}_{L^2}+2\norm{\bar U_1^2\zeta_{1,\xi}-\bar
      U_2^2\zeta_{2,\xi}}_{L^2}\\ \nn & \quad
    \phantom{e^{\norm{\zeta_2}_{L^\infty}}}\qquad
    +4\norm{c_1\bar U_1\chi\circ
      y_1\zeta_{1,\xi}-c_2\bar U_2\chi\circ
      y_2\zeta_{2,\xi}}_{L^2}+2\norm{\bar
      U_1^2-\bar U_2^2}_{L^2})\\ \nn & \leq
    \norm{y_1-y_2}_{L^\infty}
    e^{\norm{\zeta_1}_{L^\infty}+\norm{\zeta_2}_{L^\infty}}\norm{(2\bar
      U_1^2+4c_1\bar U_1\chi\circ y_1)
      y_{1,\xi}+h_1}_{L^2}\\ \nn & \quad
    +e^{\norm{\zeta_2}_{L^\infty}}\Big(\norm{h_1-h_2}_{L^2}+2(\norm{\bar
      U_1}_{L^\infty}+\norm{\bar
      U_2}_{L^\infty})\norm{\bar U_1-\bar
      U_2}_{L^2}+4\vert c_1-c_2\vert \norm{\bar
      U_1}_{L^2}\\ \nn & \quad
    \phantom{e^{\norm{\zeta_2}_{L^\infty}}}\qquad
    +\vert c_2\vert \norm{\bar U_1-\bar
      U_2}_{L^2}+\vert c_2\vert \norm{\bar
      U_2}_{L^2}\norm{\chi\circ y_1-\chi\circ
      y_2}_{L^\infty}\\ \nn & \quad
    \phantom{e^{\norm{\zeta_2}_{L^\infty}}}\qquad
    +2\norm{\zeta_{1,\xi}}_{L^2}\norm{\bar
      U_1-\bar U_2}_{L^\infty}\norm{\bar U_1+\bar
      U_2}_{L^\infty}\\ \nn & \quad
    \phantom{e^{\norm{\zeta_2}_{L^\infty}}}\qquad
      +2\norm{\bar
      U_2}_{L^\infty}^2\norm{\zeta_{1,\xi}-\zeta_{2,\xi}}_{L^2}
    +\norm{\zeta_{1,\xi}-\zeta_{2,\xi}}_{L^2}\norm{4c_1\bar
      U_1\chi\circ
      y_1}_{L^\infty}
    \\ \nn & \quad
    \phantom{e^{\norm{\zeta_2}_{L^\infty}}}\qquad          
      +4\norm{\zeta_{2,\xi}}_{L^2}\norm{c_1\bar
      U_1\chi\circ y_1-c_2\bar U_2\chi\circ
      y_2}_{L^\infty}\Big),
  \end{align}
  where
  \begin{align}\nn
    \norm{c_1\bar U_1\chi\circ y_1-c_2\bar
      U_2\chi\circ y_2}_{L^\infty} &\leq \vert
    c_1- c_2\vert\norm{\bar U_1}_{L^\infty}+\vert
    c_2\vert \norm{\bar U_1-\bar U_2}_{L^\infty}\\
    \nn & \quad+C\vert c_2\vert\norm{\bar U_2}_{L^\infty}
    \norm{y_1-y_2}_{L^\infty}.
  \end{align}
  Since $A$ is linear and continuous from $L^2(\Real)$ to $H^1(\Real)$, the composition $A\circ R$ is locally Lipschitz from $E$ to $H^1(\Real)$.
  Then, we use the following lemma, which is stated without proof.
  \begin{lemma}\label{lem:BLip}
    Let $\mathcal{R}_1\colon E\to V$ and $\mathcal{R}_2:E\to H^1(\Real)$, or $\mathcal{R}_2:E\to V$ be two locally Lipschitz maps. Then the product $X\to \mathcal{R}_1(X)\mathcal{R}_2(X)$ is also locally Lipschitz from $E$ to $H^1(\Real)$, or from $E$ to $V$. 
  \end{lemma}

  Since the mapping $X\mapsto e^{-\zeta}$ is
  locally Lipschitz from $E$ to $V$, the function
  $\mathcal{Q}_1$ is the product of two locally
  Lipschitz maps, one from $E$ to $H^1(\Real)$ and
  the other one from $E$ to $V$, it is locally
  Lipschitz from $E$ to $H^1(\Real)$. Similarly
  one proves that $\mathcal{Q}_2$ is locally
  Lipschitz.  
  Thus it is left to show that $X\mapsto g'\circ y$ is locally Lipschitz from $E$ to $H^1(\Real)$. 
  By \eqref{eq:gprime} we have 
  \begin{align*}
 g'(y(\xi))&=\int_{-\infty}^\xi e^{-(y(\xi)-y(z))}\chi'(y(z))\chi(y(z))y_\xi(z)dz\\
&\quad +\int_\xi^\infty e^{-(y(z)-y(\xi))}\chi'(y(z))\chi(y(z))y_\xi(z)dz\\
&= I_1(\xi)+I_2(\xi).
\end{align*} 
Introduce $v(z)=e^{\zeta(z)}\chi^\prime
(y(z))\chi(y(z))y_\xi(z)$, then we can write
$I_1(\xi)$ as
\begin{equation}
  I_1(\xi)=e^{-\zeta(\xi)}A(v)
\end{equation}
and we only have to check that the mapping
$X\mapsto v$ is locally Lipschitz from $E$ to
$L^2(\Real)$. This follows from the smoothness of $\chi$
and the fact that
\begin{align*}
  \norm{\chi^{\prime}(y(\xi))}_{L^2}&\leq\norm{\chi^\prime}_{L^\infty}(\meas\{\xi\in\Real\
  |\ y(\xi)\in[0,1]\})^{1/2}\\
  &\leq\norm{\chi^\prime}_{L^\infty}(\meas\{[-\norm{\zeta}_{L^\infty},1+\norm{\zeta}_{L^\infty}]\})^{1/2}\\
  &\leq C.
\end{align*}
Therefore $\mathcal{Q}$ is locally Lipschitz from
$E$ to $H^1(\Real)$.  To prove that
$\mathcal{P}-U^2$ is locally Lipschitz from $E$ to
$H^1(\Real)$ one can use the same techniques after
discovering that one can write, using
\eqref{eq:defG},
\begin{align*}
  P(\xi)-U(\xi)^2& =\frac{1}{4}\int_\Real
  e^{-\vert y(\xi)-y(\eta)\vert}\Big( (2\bar
  U^2+4c\bar U\chi(y))y_\xi+h\Big)(\eta)d\eta \\
  \nn &\quad +\frac{1}{2}\int_\Real e^{-\vert
    y(\xi)- y(\eta)\vert}
  \Big(2c^2(\chi'(y))^2+2c^2\chi(y)\chi''(y)\Big)y_\xi(\eta)
  d\eta\\ \nn &\quad -\bar U(\xi)^2-2\bar
  U(\xi)c\chi(y(\xi)).
  \end{align*}
  Hence $\mathcal{Q}$ and $\mathcal{P}-U^2$ are
  locally Lipschitz continuous from $E$ to
  $H^1(\Real)$.
\end{proof}

By the above lemma we have that $Q\in H^1(\Real)$ and therefore $\lim_{\xi\to\pm\infty}Q(t,\xi)=0$. Hence \eqref{eq:govsys} implies that if a solution in $E$ exists the asymptotic behavior of $U(t,\xi)$ must be preserved for all times. Thus using that $y-\id\in L^\infty(\Real)$ we can write $U(t,\xi)=\bar U(t,\xi)+c\chi\circ y(t,\xi)$. Therefore,
we can also write \eqref{eq:govsys} as
\begin{subequations}
  \label{eq:govsyss}
  \begin{align}
    \label{eq:govsyss1}
    y_t&=U,\\
    \label{eq:govsyss2}
    \bar U_t&=-Q-c(\chi'\circ y)U,\\
    \label{eq:govsyss3}
    c_t&=0,\\
    \label{eq:govsyss4}
    h_t&=2(U^2-P)U_\xi.
  \end{align}
\end{subequations}

\begin{theorem}\label{thm:short}
 Given $X_0=(\zeta_0, U_0, h_0)$ in $E$, then there exists a time $T$ depending only on $\norm{X_0}_E$ such that \eqref{eq:govsys} admits a unique solution in $C^1([0,T],E)$ with initial data $X_0$.
\end{theorem}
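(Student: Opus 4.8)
The plan is to recast the system \eqref{eq:govsys}, written in the form \eqref{eq:govsyss} in the coordinates $(\zeta,\bar U,c,h)\in\bar E$, as an abstract ordinary differential equation
\begin{equation*}
  X_t=F(X),\qquad X(0)=X_0,
\end{equation*}
in the Banach space $\bar E=V\times H^1(\Real)\times\Real\times L^2(\Real)$, and then to apply the standard Picard/contraction argument for ODEs in a Banach space. Reading off \eqref{eq:govsyss}, the vector field is
\begin{equation*}
  F(X)=\big(\bar U+c\,\chi\circ y,\ -Q-c\,(\chi'\circ y)U,\ 0,\ 2(U^2-P)U_\xi\big),
\end{equation*}
where $y=\zeta+\id$, $U=\bar U+c\,\chi\circ y$, and $P=\mathcal{P}(X)$, $Q=\mathcal{Q}(X)$. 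Once $F$ is shown to be locally Lipschitz from $\bar E$ to $\bar E$, the contraction mapping theorem applied to the integral formulation on $C([0,T],\bar E)$ yields a unique fixed point for $T$ chosen small, and the claimed $C^1$-regularity follows by bootstrapping: since the fixed point lies in $C([0,T],\bar E)$ and $F$ is continuous, $t\mapsto F(X(t))$ is continuous, whence $X_t=F(X)$ is continuous in time. Since $E$ and $\bar E$ are isometric, this transfers back to $C^1([0,T],E)$.

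The core of the argument is therefore to verify that each component of $F$ is well-defined and locally Lipschitz into the correct target space, and here Lemma \ref{lem:PQ} and Lemma \ref{lem:BLip} do the heavy lifting. For the first component I would check that $X\mapsto \bar U+c\,\chi\circ y$ is locally Lipschitz from $\bar E$ to $V$: the term $\bar U$ lands in $H^1(\Real)\hookrightarrow V$, while for $c\,\chi\circ y$ one uses that $\chi\circ y$ is bounded and $(\chi\circ y)_\xi=(\chi'\circ y)y_\xi$ belongs to $L^2(\Real)$, the latter because $\chi'$ is supported in $[0,1]$, so $\chi'\circ y$ is supported on a set of measure controlled by $\norm{\zeta}_{L^\infty}$ (exactly the estimate on $\meas\{\xi: y(\xi)\in[0,1]\}$ used in the proof of Lemma \ref{lem:PQ}). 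The second component is handled by Lemma \ref{lem:PQ}, which gives $Q=\mathcal{Q}(X)\in H^1(\Real)$ with local Lipschitz dependence, together with Lemma \ref{lem:BLip} applied to the product $c\,(\chi'\circ y)U$, the factor $\chi'\circ y$ being a composition that is locally Lipschitz into $H^1(\Real)$ by the same compact-support argument.

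For the last component I would write $2(U^2-P)U_\xi=-2\big(\mathcal{P}(X)-U^2\big)U_\xi$ and observe that $X\mapsto \mathcal{P}(X)-U^2$ is locally Lipschitz from $\bar E$ to $H^1(\Real)$ by Lemma \ref{lem:PQ}, while $X\mapsto U_\xi$ is locally Lipschitz from $\bar E$ to $L^2(\Real)$, which is just the derivative part of the first-component estimate. Since $H^1(\Real)\hookrightarrow L^\infty(\Real)$, the pointwise product is a bounded bilinear map $H^1(\Real)\times L^2(\Real)\to L^2(\Real)$, and a routine bilinear estimate then shows the product is locally Lipschitz into $L^2(\Real)$. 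Assembling the three estimates gives $F\colon\bar E\to\bar E$ locally Lipschitz.

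The main obstacle I anticipate is bookkeeping rather than conceptual: one must split every product and composition in $F$ so that it lands in the correct space ($V$ for the $\zeta$-equation, $H^1(\Real)$ for the $\bar U$-equation, $L^2(\Real)$ for the $h$-equation) and control the composition terms $\chi\circ y$, $\chi'\circ y$ uniformly on bounded subsets of $\bar E$ via the measure bound for $\{\xi:y(\xi)\in[0,1]\}$. Once the local Lipschitz property of $F$ is in hand, the dependence of $T$ on $\norm{X_0}_E$ alone is automatic: the Lipschitz constant and the supremum of $\norm{F}$ on a fixed ball around $X_0$ are determined by $\norm{X_0}_E$, and the existence time coming out of the contraction estimate is of the form $T\sim\rho/(\text{these bounds})$.
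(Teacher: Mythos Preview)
Your proposal is correct and follows essentially the same approach as the paper: rewrite \eqref{eq:govsys} as an integral equation $X(t)=X_0+\int_0^tF(X(\tau))\,d\tau$ in the Banach space $E$ (equivalently $\bar E$), verify via Lemma~\ref{lem:PQ} and Lemma~\ref{lem:BLip} that $F$ is locally Lipschitz, and conclude by the standard contraction argument. The paper's own proof states this in three lines without spelling out the component-by-component verification you outline, so your write-up is simply a more detailed version of the same argument.
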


\begin{proof}  
 Solutions of \eqref{eq:govsys} can be rewritten as 
\begin{equation*}
 X(t)=X_0+\int_0^t F(X(\tau))d\tau,
\end{equation*}
where $F\colon E\to E$ is defined by the
right-hand side of \eqref{eq:govsys}. The
integrals are defined as Riemann integrals of
continuous functions on the Banach space
$E$. Using Lemma~\ref{lem:PQ} we can check that
$F(X)$ is a Lipschitz function on bounded sets of
$E$. Since $E$ is a Banach space, we use the
standard contraction argument to prove the
theorem.
\end{proof}

After differentiating \eqref{eq:govsys} we obtain
\begin{subequations}
 \label{eq:govsysder}
\begin{align}
 \label{eq:govsysder1}
 y_{\xi,t}& =U_\xi,\\ 
\label{eq:govsysder2}
\bar U_{\xi,t} &= \frac{1}{2}h+(U^2-P)y_\xi-c\chi''\circ yy_\xi U+c\chi'\circ yQ,\\
\label{eq:govsysder3}
U_{\xi,t}&=\frac{1}{2}h+(U^2-P)y_\xi, \\
\label{eq:govsysder4}
h_t&=2(U^2-P)U_\xi. 
\end{align}
\end{subequations}
We define the set $\G$ as follows.
\begin{definition}
 The set $\mathcal{G}$ is composed of all $(\zeta, U, h)\in E$ such that 
\begin{subequations}\label{eq:G}
 \begin{align}\label{eq:G1}
   &(\zeta, U)\in [W^{1,\infty}(\Real)]^2,\quad
   h\in L^{\infty}(\Real), \\ \label{eq:G2} 
   & y_\xi\geq 0, \quad h\geq 0, \quad y_\xi+h>0 \text{ almost
     everywhere},\\ \label{eq:G3} 
     & y_\xi h=U_\xi^2 \text{ almost everywhere},
 \end{align}
\end{subequations}
where we denote $y(\xi)=\xi+\zeta(\xi)$.
\end{definition}
As in \cite[Lemma 2.7]{HR}, we can prove that the
set $\G$ is preserved by the flow, that is, for
any initial data $X_0=(\zeta_0, U_0, h_0)$ in
$\mathcal{G}$, if $X(t)=(\zeta(t), U(t), h(t))$ is
the short-time solution of \eqref{eq:govsys} in
$C^1([0,T],E)$ for some $T>0$ with initial data
$(\zeta_0, U_0, h_0)$, then $X(t)$ belongs to
$\mathcal{G}$ for all $t\in[0,T]$. Moreover we
have that, for almost every $t\in[0,T]$,
\begin{equation}
  \label{eq:posyxi}
  y_\xi(t,\xi)>0\text{ for almost every }\xi\in\Real.
\end{equation}
Using this property, we can derive the necessary
estimate to prove the global existence of
solutions to \eqref{eq:govsys}.
\begin{theorem}\label{th:global}
  For any $X_0=(y_0, U_0, h_0)\in\mathcal{G}$, the
  system \eqref{eq:govsys} admits a unique global
  solution $X(t)=(y(t), U(t), h(t))$ in
  $C^1([0,\infty), E)$ with initial data
  $X_0=(y_0,U_0, h_0)$. We have
  $X(t)\in\mathcal{G}$ for all times. If we equip
  $\mathcal{G}$ with the topology inducted by the
  $E$-norm, then the mapping $S:\mathcal{G}\times
  [0,\infty)\to \mathcal{G}$ defined as
\begin{equation}\nn
S_t(X_0)=X(t)
\end{equation}
is a continuous semigroup.
\end{theorem}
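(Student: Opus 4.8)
The plan is first to globalize the local solution of Theorem~\ref{thm:short} and then to deduce the semigroup property and its continuity. Since Theorem~\ref{thm:short} provides a local solution on $[0,T]$ with $T$ depending only on $\norm{X_0}_E$, a solution extends to all of $C^1([0,\infty),E)$ as soon as $\norm{X(t)}_E$ is shown to stay finite on each bounded interval $[0,T^*]$: one then re-applies the local result with a uniform time step. By~\eqref{eq:govsyss3} the asymptotic value $c$ is constant in time, so $\abs{c}$ needs no estimate, and it remains only to control $\norm{\zeta}_V$, $\norm{\bar U}_{H^1(\Real)}$ and $\norm{h}_{L^2(\Real)}$.

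The engine of the a priori bound is the renormalized energy
\[
  E(t)=\int_\Real\big(\bar U^2y_\xi+h\big)(t,\xi)\,d\xi,
\]
which is finite on $\mathcal{G}$. In the vanishing case $c=0$ the quantity $\int_\Real(U^2y_\xi+h)\,d\xi$ is exactly conserved, as differentiating with~\eqref{eq:govsys}, using $P_\xi=Qy_\xi$ from~\eqref{eq:derP}, and invoking $U,P\to0$ at $\pm\infty$ shows. For $c\neq0$ I would differentiate $E(t)$, integrate by parts in $\xi$, and again use~\eqref{eq:derP}; the boundary terms at $\xi=\pm\infty$ (read off from $U\to c$, $P\to c^2$, $Q\to0$ at $+\infty$ and $U,P,Q\to0$ at $-\infty$) and the source terms, supported where $\chi'\circ y\neq0$ --- a set of $\xi$ of finite measure, as in Lemma~\ref{lem:PQ} --- are all bounded by $C(\abs{c},\norm{\zeta}_{L^\infty})(1+E(t))$. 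A Gronwall argument then keeps $E(t)$, and hence $\norm{h}_{L^1(\Real)}$ and $\int_\Real\bar U^2y_\xi\,d\xi$, finite on $[0,T^*]$. This energy step is the main obstacle: the conserved energy of \cite{HR} is infinite when $c\neq0$, so one must isolate the finite part and control the boundary and $\chi'$-supported contributions produced by the nonzero asymptotics.

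With $E(t)$ under control, $\norm{U}_{L^\infty(\Real)}$ is bounded: since $\bar U\in H^1(\Real)$ vanishes at $\pm\infty$, one has $\bar U(\xi)^2=2\int_{-\infty}^\xi\bar U\bar U_\eta\,d\eta$, and Cauchy--Schwarz together with $U_\xi^2=y_\xi h$ from~\eqref{eq:energyxi} (whence $\bar U_\xi^2/y_\xi=h-2c\,\chi'\!\circ y\,U_\xi+c^2(\chi'\!\circ y)^2y_\xi$, whose integral is controlled because $\int_\Real(\chi'\!\circ y)^2y_\xi\,d\xi=\norm{\chi'}_{L^2}^2$) bounds $\norm{\bar U}_{L^\infty}$ in terms of $E(t)$; hence $\norm{U}_{L^\infty}\le\norm{\bar U}_{L^\infty}+\abs{c}$. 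Given $\norm{U}_{L^\infty}$ and $\norm{\zeta}_{L^\infty}$, Lemma~\ref{lem:PQ} and the kernel bounds on~\eqref{eq:defP}--\eqref{eq:defQ} bound $\norm{P}_{L^\infty}$, $\norm{Q}_{L^\infty}$ and $\norm{Q}_{H^1}$. Substituting into~\eqref{eq:govsysder} and estimating $\tfrac{d}{dt}\norm{\zeta}_{L^\infty}$, $\tfrac{d}{dt}\norm{\zeta_\xi}_{L^2}^2$, $\tfrac{d}{dt}\norm{\bar U}_{H^1}^2$ and $\tfrac{d}{dt}\norm{h}_{L^2}^2$ closes a Gronwall system for $\norm{X(t)}_E$, ruling out finite-time blow-up and giving the global solution. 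That $X(t)\in\mathcal{G}$ for all $t$, and~\eqref{eq:posyxi}, are exactly the flow-invariance of $\mathcal{G}$ recalled before the statement (as in \cite[Lemma~2.7]{HR}).

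Finally, as~\eqref{eq:govsys} is autonomous with unique solutions, $S_0=\id$ and $S_{t+s}=S_t\circ S_s$, so $S$ is a semigroup on $\mathcal{G}$. For continuity I would fix a bounded set $B\subset\mathcal{G}$ and a horizon $T^*$; the a priori bounds confine all trajectories issuing from $B$ to a common bounded set $\tilde B\subset E$ on $[0,T^*]$, where $F$ is Lipschitz by Lemma~\ref{lem:PQ}. The standard two-solution Gronwall estimate then gives $\norm{S_t(X_0)-S_t(Y_0)}_E\le e^{Lt}\norm{X_0-Y_0}_E$, i.e.\ Lipschitz dependence on the initial datum uniform in $t\in[0,T^*]$, while continuity in $t$ is immediate from $X\in C^1([0,\infty),E)$. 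Joint continuity of $(t,X_0)\mapsto S_t(X_0)$ follows.
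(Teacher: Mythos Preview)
Your approach is essentially the paper's: both pivot on the renormalized energy $\Gamma=\int_\Real(\bar U^2y_\xi+h)\,d\xi$ (your $E(t)$), establish a Gronwall inequality for it, deduce $L^\infty$ bounds on $\bar U$, $P$, $Q$, and then close the full $E$-norm estimate following \cite[Theorem~2.8]{HR}; the semigroup and continuity arguments are identical.

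One refinement is worth flagging. You allow the constant in $dE/dt\le C(1+E)$ to depend on $\norm{\zeta}_{L^\infty}$, but $\norm{\zeta}_{L^\infty}$ is not yet controlled at that stage, so the Gronwall step is circular as written. The paper sidesteps this by observing that every $\chi'\!\circ y$-supported source term in $d\Gamma/dt$ actually carries a factor $y_\xi$, whence $\int_\Real\chi'(y)y_\xi\,d\xi=1$ by change of variables, and that the $L^\infty$ bounds on $P$ and $Q$ from \eqref{eq:defP}--\eqref{eq:defQ} use $\int_\Real e^{-\abs{y(\xi)-y(\eta)}}y_\xi(\eta)\,d\eta=2$; thus the constant in $d\Gamma/dt\le C\Gamma+C$ depends only on $c$ and $\chi$, not on $\zeta$, and Gronwall applies cleanly. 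Your argument can be repaired either by making this same observation or by coupling with $\frac{d}{dt}\norm{\zeta}_{L^\infty}\le\abs{c}+\norm{\bar U}_{L^\infty}\le C(1+\sqrt{E})$ and running a joint Gronwall, but the paper's route is cleaner.
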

\begin{proof} 
  The solution has a finite time of existence $T$
  only if $\norm{(\zeta,U,h)(t,\dott)}_E$ blows up
  when $t$ tends to $T$ because, otherwise, by
  Theorem \ref{thm:short}, the solution can be
  prolongated by a small time interval beyond
  $T$. Let $(\zeta,U,h)$ be a solution of
  \eqref{eq:govsys} in $C^1([0,T),E)$ with initial
  data $(\zeta_0,U_0,h_0)$. We want to prove that
  \begin{equation}
    \label{eq:enormbound}
    \sup_{t\in[0,T)}\norm{(\zeta(t,\dott), U(t,\dott),h(t,\dott))}_E<\infty.
  \end{equation}
  We can follow the proof of \cite[Theorem
  2.8]{HR} once we have established that
  \begin{equation}
    \label{eq:inftybdPQ}
    \sup_{t\in[0,T)}\left(\norm{U(t,\cdot)}_{L^\infty}+\norm{P(t,\cdot)}_{L^\infty}+\norm{Q(t,\cdot)}_{L^\infty}\right)<\infty.
  \end{equation}
  Let us introduce
 \begin{equation*}
    \Gamma=\int_{\Real} \bar U^2y_\xi\,d\xi+\norm{h}_{L^1}.
  \end{equation*}
  By \eqref{eq:G3}, we have
  \begin{equation}\label{eq:hL1}
    h=U_\xi^2-\zeta_\xi h
  \end{equation}
  and therefore $h\in L^1(\Real)$. Moreover since $h\geq
  0$, we have $\norm{h}_{L^1}=\int_\Real
  h\,d\xi$. 
  We can estimate the $\norm{\bar U}_{L^\infty}^2$
  as follows.
  \begin{subequations}
    \label{eq:derivULinf}
    \begin{align*}
      \bar U^2(\xi)&=2\int_{-\infty}^{\xi}\bar U\bar U_\xi\,d\eta\\
      &=2\int_{-\infty}^{\xi}\bar
      UU_\xi\,d\eta-2\int_{-\infty}^{\xi}c\bar
      U\chi'\circ y
      y_\xi\,d\eta\\
      &\leq \int_{\{\xi\mid  y_\xi(\xi)>0\}}\bar
      U^2y_\xi+\frac{U_\xi^2}{y_\xi}\,d\eta-2\int_{\Real}c\bar
      U\chi'\circ y
      y_\xi\,d\eta\\
      &\leq \int_{\{\xi\mid  y_\xi(\xi)>0\}}(\bar{U}^2y_\xi+h)\,d\eta+2C\norm{\bar U}_{L^\infty}\\
      &\leq \Gamma+2C\norm{\bar U}_{L^\infty}.
    \end{align*}
  \end{subequations}
  After using that $\norm{\bar U}_{L^\infty}\leq
  \frac{1}{4C}\norm{\bar U}_{L^\infty}^2+C$, we
  get
  \begin{equation}\label{est:Uinf}
    \norm{\bar U}_{L^\infty}^2\leq 2\Gamma+C.
  \end{equation}
  From \eqref{eq:defP}, we get
  \begin{align}\label{est:Pinf}
    \norm{P}_{L^\infty}&\leq \frac12(\norm{\bar
      U}_{L^\infty}^2+2\vert c\vert \norm{\bar
      U}_{L^\infty})\int_\Real
    e^{-\abs{y(\xi)-y(\eta)}}y_\xi\,d\eta+\Gamma+c^2\\
    \notag
    &\leq (2\norm{\bar
      U}_{L^\infty}^2+\vert c\vert^2)+\Gamma+\vert c\vert^2 \\
    \notag
    &\leq 5\Gamma+C.
\end{align}
Similarly, one obtains that
\begin{equation}
  \label{est:Qinf}
  \norm{Q}_{L^\infty}\leq 5\Gamma+C.
\end{equation}
Hence, \eqref{eq:inftybdPQ} will be proved when
we prove that $\sup_{t\in[0,T)}\Gamma<\infty$. We
can now compute the variation of $\Gamma$. From
\eqref{eq:govsyss} we get
\begin{align*}
  \frac{d\Gamma}{dt}&=\int_{\Real}2\bar U\bar
  U_ty_\xi\,d\xi+\int_{\Real}\bar U^2y_{\xi
    t}\,d\xi+\int_\Real h_t\,d\xi\\
  &=\int_\Real2\bar U(-Q-cU\chi'\circ y)y_\xi\,d\xi+\int_\Real \bar
  U^2U_\xi\,d\xi+\int_{\Real}2(U^2-P)U_\xi\,d\xi.
\end{align*}
We estimate each of these three integrals, that we
denote $A_1$, $A_2$ and $A_3$, respectively.
We have 
\begin{align*}
 A_1& = -2\int_\Real Q\bar Uy_\xi d\xi-2\int_\Real c\bar U^2\chi'\circ yy_\xi+c^2\bar U\chi\circ y\chi'\circ yy_\xi d\xi\\ 
& \leq -2\int_\Real P_\xi\bar Ud\xi+C\Gamma+C\norm{\bar U}_{L^\infty}\\
& \leq 2\int_\Real P\bar U_\xi d\xi+C\Gamma+C\norm{\bar U}_{L^\infty},
\end{align*}
after integration by parts, since $P_\xi=Qy_\xi$.
We have
\begin{align*}
  A_2&=\int_\Real \bar U^2\bar
  U_\xi\,d\xi+\int_\Real c\bar U^2(\chi'\circ
  y)y_\xi d\xi\\
  &=\int_\Real c\bar U^2(\chi'\circ y)y_\xi d\xi\leq
  C\Gamma.
\end{align*}
We have
\begin{align*}
 A_3& = 2\int_\Real U^2 U_\xi d\xi-2\int_\Real P\bar U_\xi d\xi-2c\int_\Real P\chi'\circ yy_\xi d\xi\\
& = -2\int_\Real P\bar U_\xi d\xi -2c\int_\Real P\chi'\circ yy_\xi d\xi+\frac{2}{3}c^3\\ 
& \leq -2\int_\Real P\bar U_\xi d\xi +C\norm{P}_{L^\infty}+C.
\end{align*}

Finally, by adding up all
these estimates, we get
\begin{align*}
 \frac{d\Gamma}{dt}& \leq C\Gamma+C\norm{\bar U}_{L^\infty}+C\norm{P}_{L^\infty}\\
& \leq C\Gamma +C+C\norm{\bar U}_{L^\infty}^2+C\norm{P}_{L^\infty}\\
& \leq C\Gamma+C,
\end{align*}
by \eqref{est:Uinf} and \eqref{est:Pinf}.  Hence,
Gronwall's lemma implies that
$\sup_{t\in[0,T)}\Gamma(t)<\infty$ Using now
\eqref{est:Uinf}, \eqref{est:Pinf}, and
\eqref{est:Qinf}, we immediately obtain that the
same is true for $\norm{\bar
  U(t,\dott)}_{L^\infty}$,
$\norm{P(t,\dott)}_{L^\infty}$, and
$\norm{Q(t,\dott)}_{L^\infty}$, which are bounded
by a constant only dependent on
$\sup_{t\in[0,T)}\Gamma(t)<\infty$ for $t\in
[0,T)$.\end{proof}

\section{From Eulerian to Lagrangian coordinates and vice versa}
The appropriate set to construct a semigroup of
conservative solutions is the set $\D$ defined
below, which allows for concentration of the
energy in domains of zero measure.
\begin{definition}\label{def:euler}
  The set $\mathcal{D}$ is composed of all pairs
  $(u,\mu)$ such that $u\in H_{0,\infty}(\Real)$ and
  $\mu$ is a positive finite Radon measure whose
  absolutely continuous part, $\mu_{ac}$,
  satisfies
\begin{equation}
  \mu_{ac}=u_x^2dx.
\end{equation}
\end{definition}
The system \eqref{eq:govsys} is invariant with
respect to relabeling. Relabeling is modeled by
the action of the group of diffeomorphisms $G$
that we now define.
\begin{definition}\label{def:G}
  We denote by $G$ the subgroup of the group of
  homeomorphisms from $\Real$ to $\Real$ such that
\begin{subequations}
\label{eq:Gcond}
 \begin{align}
  \label{eq:Gcond1}
  f-\id \text{ and } f^{-1}-\id &\text{ both belong to } W^{1,\infty}(\Real), \\
  \label{eq:Gcond2}
  f_\xi-1 &\text{ belongs to } L^2(\Real),
 \end{align}
\end{subequations}
where $\id$ denotes the identity function. Given $\alpha>0$, we denote by $G_\alpha$ the subset $G_\alpha$ of $G$ defined by 
\begin{equation}
 G_\alpha=\{ f\in G\mid  \norm{f-\id}_{W^{1,\infty}}+\norm{f^{-1}-\id}_{W^{1,\infty}}\leq\alpha\}. 
\end{equation}
\end{definition}
We define the subsets $\mathcal{F}_\alpha$ and
$\mathcal{F}$ of $\mathcal{G}$ as follows
\begin{equation*}
\mathcal{F}_\alpha=\{X=(y,U,h)\in\mathcal{G}\mid  y+H\in G_\alpha\},
\end{equation*}
and
\begin{equation*}
\mathcal{F}=\{X=(y,U,h)\in\mathcal{G}\mid  y+H\in G\},
\end{equation*}
where $H(t,\xi)$ is defined by 
\begin{equation*}
 H(t,\xi)=\int_{-\infty}^\xi h(t,\tau)d\tau,
\end{equation*}
which is finite since, from \eqref{eq:G3}, we have
$h=U_\xi^2-\zeta_\xi h$ and therefore $h\in
L^1(\Real)$.  For $\alpha=0$, we have
$G_0=\{\id\}$. As we shall see, the space
$\mathcal{F}_0$ will play a special role. These
sets are relevant only because they are preserved
by the governing equation \eqref{eq:govsys}. In
particular, while the mapping $\xi\mapsto
y(t,\xi)$ may not be a diffeomorphism for some
time $t$, the mapping $\xi\mapsto
y(t,\xi)+H(t,\xi)$ remains a diffeomorphism for
all times $t$. As in \cite[Lemma 3.3]{HR}, we can
establish that the space $\mathcal{F}$ is
preserved by the governing equations
\eqref{eq:govsys}. More precisely, we have that,
given $\alpha$, $T\geq 0$, and $X_0\in
\mathcal{F}_\alpha$,
\begin{equation*}
 S_t(X_0)\in \mathcal{F}_{\alpha'}, 
\end{equation*}
for all $t\in[0,T]$ where $\alpha'$ only depends
on $T$, $\alpha$ and $\norm{X_0}_E$. For the sake
of simplicity, for any $X=(y, U, h)\in
\mathcal{F}$ and any function $f\in\Gr$, we denote
$(y\circ f, U\circ f, h\circ ff_\xi)$ by $X\circ
f$. Then, $X\circ f$ corresponds to the
relabeling of $X$ with respect to the relabeling
function $f\in G$. The map from $\Gr\times\F$ to
$\F$ given by $(f,X)\mapsto X\circ f$ defines an
action of the group $\Gr$ on $\F$, see
\cite[Proposition 3.4]{HR}. Since $\Gr$ is acting
on $\F$, we can consider the quotient space
$\quot$ of $\F$ with respect to the action of the
group $G$. The equivalence relation on $\F$ is
defined as follows: For any $X,X'\in\F$, we say
that $X$ and $X'$ are equivalent if there exists
$f\in\Gr$ such that $X'=X\circ f$. We denote by
$\Pi(X)=[X]$ the projection of $\F$ into the
quotient space $\quot$, and introduce the mapping
$\Gamma\colon\F\rightarrow\F_0$ given by
\begin{equation*}
\Gamma(X)=X\circ (y+H)\inv
\end{equation*}
for any $X=(y,U,h)\in\F$. We have $\Gamma(X)=X$
when $X\in\F_0$ and $\Gamma$ is invariant under
the $\Gr$ action, that is, $\Gamma(X\circ
f)=\Gamma(X)$ for any $X\in\F$ and
$f\in\Gr$. Hence, we can define a mapping
$\tilde\Gamma$ from the quotient space $\quot$ to
$\F_0$ as $\tilde\Gamma([X])=\Gamma(X)$ and the
sets $\F_0$ and $\F$ are in bijection as
\begin{equation*}
  \tilde\Gamma\circ\Pi|_{\F_0}=\id|_{\F_0}.
\end{equation*}
We equip $\F_0$ with the metric induced by the
$E$-norm, i.e., $d_{\F_0}(X,X')=\norm{X-X'}_E$ for
all $X,X'\in\F_0$. Since $\F_0$ is closed in $E$,
this metric is complete. We define the metric on
$\quot$ as
\begin{equation*}
d_\quot([X],[X'])=\norm{\Gamma(X)-\Gamma(X')}_E,
\end{equation*}
for any $[X],[X']\in\quot$. Then, $\quot$ is
isometrically isomorphic with $\F_0$ and the
metric $d_\quot$ is complete.

We denote by $S\colon\F\times
[0,\infty)\rightarrow \F$ the continuous semigroup
which to any initial data $X_0\in \F$ associates
the solution $X(t)$ of the system of differential
equations \eqref{eq:govsys} at time $t$. As
indicated earlier, the Camassa--Holm equation is
invariant with respect to relabeling.  More
precisely, using our terminology, we obtain the
diagram 
\begin{equation}
\label{eq:diag}
\xymatrix{
\F_0\ar[r]^{\Pi}&\quot\\
\F_\alpha\ar[u]^{\Gamma}&\\
\F_0\ar[u]^{S_t}\ar[r]^\Pi&\quot\ar[uu]_{\tilde S_t}
}
\end{equation}
which  summarizes the following theorem.
\begin{theorem}
\label{th:sgS} 
For any $t>0$, the mapping $S_t\colon\F\rightarrow\F$
is $\Gr$-equivariant, that is,
\begin{equation}
\label{eq:Hequivar}
S_t(X\circ f)=S_t(X)\circ f
\end{equation}
for any $X\in\F$ and $f\in\Gr$. Hence, the mapping
$\tilde S_t$ from $\quot$ to $\quot$ given by
\begin{equation*}
\tilde S_t([X])=[S_tX]
\end{equation*}
is well-defined. It generates a continuous
semigroup.
\end{theorem}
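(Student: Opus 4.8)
The plan is to reduce everything to a single algebraic fact, namely that the nonlinear operators $\mathcal{P}$ and $\mathcal{Q}$ from Lemma~\ref{lem:PQ} are equivariant under relabeling, and then to invoke the uniqueness part of Theorem~\ref{th:global}. First I would establish that for every $X=(y,U,h)\in\F$ and every $f\in\Gr$ one has
\begin{equation*}
\mathcal{P}(X\circ f)=\mathcal{P}(X)\circ f,\qquad \mathcal{Q}(X\circ f)=\mathcal{Q}(X)\circ f.
\end{equation*}
Recalling that $X\circ f=(y\circ f,U\circ f,h\circ f\,f_\xi)$ and that under relabeling the constant and the $H^1$-part transform as $c\mapsto c$ and $\bar U\mapsto\bar U\circ f$, this is a change-of-variables computation in the defining integrals \eqref{eq:defP}--\eqref{eq:defQ}: substituting $\eta=f(\sigma)$ and using $(y\circ f)_\xi=(y_\xi\circ f)f_\xi$ together with the identity $(h\circ f)\,f_\xi\,d\eta=h\,d\sigma$ collapses the relabeled integrand back to the original one evaluated at $f(\xi)$. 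For $\mathcal{Q}$ the only additional point is that $\sign(\xi-\eta)=\sign(f(\xi)-f(\eta))$, which holds precisely because $f$ is an increasing homeomorphism; the boundary terms $c^2g\circ y$ and $c^2g'\circ y$ transform correctly since $g\circ(y\circ f)=(g\circ y)\circ f$.

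Next I would use this to prove the equivariance \eqref{eq:Hequivar}. Fix $X_0\in\F$ and $f\in\Gr$, write $X(t)=S_t(X_0)$, and set $Y(t)=X(t)\circ f$; since the action $X\mapsto X\circ f$ maps $\F$ into $\F$ as recalled above, $Y$ inherits the $C^1$ regularity in time from $X$. Differentiating each component and using the chain rule together with the equivariance of $\mathcal{P},\mathcal{Q}$ shows that $Y$ solves \eqref{eq:govsys} with right-hand side evaluated at $Y$ itself: indeed $\partial_t(y\circ f)=U\circ f$, $\partial_t(U\circ f)=-\mathcal{Q}(X)\circ f=-\mathcal{Q}(Y)$, and $\partial_t(h\circ f\,f_\xi)=2((U\circ f)^2-\mathcal{P}(Y))(U\circ f)_\xi$, using $(U_\xi\circ f)f_\xi=(U\circ f)_\xi$. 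Since $Y(0)=X_0\circ f$, the uniqueness statement of Theorem~\ref{th:global} forces $Y(t)=S_t(X_0\circ f)$, which is exactly \eqref{eq:Hequivar}.

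With equivariance in hand, the map $\tilde S_t([X])=[S_tX]$ is well-defined: if $X'=X\circ f$ represents the same class, then $S_tX'=S_t(X)\circ f$, so $[S_tX']=[S_tX]$. The semigroup property $\tilde S_{t+s}=\tilde S_t\circ\tilde S_s$ descends directly from that of $S_t$ on $\F$. For continuity I would pass through the isometry $\tilde\Gamma\colon\quot\to\F_0$: since $d_\quot$ is by definition the pullback of the $E$-metric on $\F_0$ under $\Gamma$, continuity of $\tilde S_t$ is equivalent to continuity of the map $X\mapsto\Gamma(S_tX)$ from $\F_0$ to $\F_0$. This follows by composing the continuity of the semigroup $S\colon\F\times[0,\infty)\to\F$, which is given, with the continuity of the projection $\Gamma\colon\F\to\F_0$, $\Gamma(X)=X\circ(y+H)\inv$, established exactly as in \cite{HR}.

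I expect the computational core, and the step most prone to error, to be the relabeling invariance of $\mathcal{P}$ and $\mathcal{Q}$, because one must track simultaneously the change of variables in the convolution kernel, the Jacobian factor absorbed by $h\circ f\,f_\xi$, the unchanged constant $c$, and the correctly transformed $g$-terms; the sign function in $\mathcal{Q}$ is the subtle point and relies essentially on $f$ being orientation-preserving. The continuity of $\tilde S_t$ is conceptually the other delicate ingredient, but it has been reduced to the continuity of $\Gamma$ and of $S_t$, both of which are available, so I would treat it as routine modulo the references to \cite{HR}.
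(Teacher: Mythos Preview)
Your argument is correct and is precisely the standard route that the paper defers to by citing \cite[Theorem~3.7]{HR}: equivariance of $\mathcal P,\mathcal Q$ by change of variables, then uniqueness in Theorem~\ref{th:global}, then descent to $\quot$. One small refinement: $\Gamma$ is not known to be continuous on all of $\F$, only on each $\F_\alpha$ (cf.\ the remark following the theorem); for continuity of $\tilde S_t$ you should therefore use that $S_t$ maps $\F_0$ into some $\F_{\alpha'}$ with $\alpha'$ locally uniform, and then invoke the continuity of $\Gamma\vert_{\F_{\alpha'}}$.
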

\begin{proof} See \cite[Theorem 3.7]{HR}.
\end{proof}
Note that the continuity of $\tilde S_t$ holds
because, for any given $\alpha\geq 0$, the
restriction of $\Gamma$ to $\F_\alpha$ is a
continuous mapping from $\F_\alpha$ to $\F_0$, see
\cite[Lemma 3.5]{HR}.

Our next task is to derive the correspondence
between Eulerian coordinates (functions in $\D$)
and Lagrangian coordinates (functions in
$\quot$). Let us denote by
$L\colon\D\rightarrow\quot$ the mapping
transforming Eulerian coordinates into Lagrangian
coordinates whose definition is contained in the
following theorem.
\begin{definition}
\label{th:Ldef}
For any $(u,\mu)$ in $\D$, let
\begin{subequations}
\label{eq:Ldef}
\begin{align}
\label{eq:Ldef1}
y(\xi)&=\sup\left\{y\mid  \mu((-\infty,y))+y<\xi\right\},\\
\label{eq:Ldef3}
U(\xi)&=u\circ y(\xi),\\
\label{eq:Ldef2}
h(\xi)&=1-y_\xi(\xi).
\end{align}
\end{subequations}
Then $(y,U,h)\in\F_0$. We define
$L(u,\mu)=(y,U,h)$.
\end{definition}
Note that the mapping $L$ depends on the partition
function $\chi$. The well-posedness of this
definition is established in the same way as in
\cite[Theorem 3.8]{HR}. In the other direction, we
obtain $\mu$, the energy density in Eulerian
coordinates, by pushing forward by $y$ the energy
density in Lagrangian coordinates $h d\xi$. We are
led to the mapping $M$ which transforms Lagrangian
coordinates into Eulerian coordinates and whose
definition is contained in the following theorem.
\begin{definition}
\label{th:umudef} 
Given any element $X$ in $\F/\G$. Then $(u,\mu)$
defined as follows
\begin{subequations}
\label{eq:umudef}
\begin{align}
\label{eq:umudef1}
&u(x)=U(\xi)\text{ for any }\xi\text{ such that  }  x=y(\xi),\\
\label{eq:umudef2}
&\mu=y_\#(h\,d\xi)
\end{align}
\end{subequations}
belongs to $\D$. We denote by
$M\colon\F/\G\rightarrow\D$ the mapping which to
any $X\in\F/\G$ associates $(u,\mu)$ as given by
\eqref{eq:umudef}.
\end{definition}

The well-posedness of this definition is
established in the same way as in \cite[Theorem
3.11]{HR}. Finally, one can show that the
transformation from Eulerian to Lagrangian
coordinates is a bijection (see \cite[Theorem
3.12]{HR}).
\begin{theorem}\label{th:LMinv}The mappings $M$ and
  $L$ are invertible. We have
\begin{equation*}
L\circ M=\id_\quot\text{ and }M\circ L=\id_\D.
\end{equation*}
\end{theorem}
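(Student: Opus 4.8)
The plan is to show that $M$ and $L$ are mutual inverses by establishing the two identities $L\circ M=\id_\quot$ and $M\circ L=\id_\D$ separately, working representative-by-representative in the Lagrangian picture. Since $\quot$ is isometrically isomorphic to $\F_0$ via $\tilde\Gamma$, and $L$ lands in $\F_0$ by Definition~\ref{th:Ldef}, the cleanest route is to verify $M\circ L=\id_\D$ as an honest pointwise computation on $\D$, and then to verify $L\circ M=\id_\quot$ by checking that for any $X\in\F$, the element $L(M(X))$ is equivalent to $X$ under the $\Gr$-action, i.e. that $L(M(X))=\Gamma(X)$ in $\F_0$.

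First I would prove $M\circ L=\id_\D$. Starting from $(u,\mu)\in\D$, I form $(y,U,h)=L(u,\mu)$ via \eqref{eq:Ldef} and then compute $(v,\nu)=M(y,U,h)$ through \eqref{eq:umudef}. The recovery of $u$ is the statement that $v(x)=U(\xi)=u(y(\xi))=u(x)$ whenever $x=y(\xi)$; the only subtlety is that $y$ from \eqref{eq:Ldef1} need not be injective, so I must check that $U$ is constant on the intervals where $y$ is constant, which holds because $u$ is continuous and $y$ is monotone. For the measure, I must show $y_\#(h\,d\xi)=\mu$; by \eqref{eq:Ldef2} we have $h\,d\xi=(1-y_\xi)\,d\xi$, and the defining relation $\mu((-\infty,y))+y<\xi$ makes $\xi\mapsto y$ the generalized inverse of $x\mapsto \mu((-\infty,x))+x$, from which the push-forward of $(1-y_\xi)\,d\xi$ recovers exactly the mass distribution of $\mu$ (splitting into absolutely continuous and singular parts, using $\mu_{ac}=u_x^2\,dx$ and \eqref{eq:G3}).

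Next I would prove $L\circ M=\id_\quot$. Given $[X]\in\quot$, I pick the representative $X=(y,U,h)\in\F_0$, set $(u,\mu)=M(X)$, and must show $L(u,\mu)=X$ in $\F_0$. Because $X\in\F_0$ means $y+H=\id$, the function $x\mapsto\mu((-\infty,x))+x$ composed with $y$ returns the identity in $\xi$, so the generalized inverse in \eqref{eq:Ldef1} reproduces the original $y$; then $U$ and $h$ follow from \eqref{eq:Ldef3}--\eqref{eq:Ldef2}. Since both $L(u,\mu)$ and $X$ lie in $\F_0$ and the quotient metric identifies $\quot$ with $\F_0$, equality in $\F_0$ gives equality in $\quot$. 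The $\Gr$-invariance of $M$ (it factors through the quotient by construction, Definition~\ref{th:umudef}) guarantees this is independent of the chosen representative.

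The main obstacle is the push-forward identity $y_\#(h\,d\xi)=\mu$ and its inverse, because the map $y$ genuinely fails to be a diffeomorphism precisely where the energy measure $\mu$ has singular or atomic concentration: on the plateaus of $y$ (where $y_\xi=0$ and $h=1$) the measure $\mu$ carries its singular part, while on the strictly increasing portions $h\,d\xi$ reproduces $u_x^2\,dx$. Handling both parts simultaneously, and verifying that the generalized inverse construction in \eqref{eq:Ldef1} correctly allocates mass across the jumps of $x\mapsto\mu((-\infty,x))+x$, is the delicate bookkeeping; all of this parallels \cite[Theorems 3.8, 3.11, 3.12]{HR}, and the only genuinely new ingredient is the nonvanishing-asymptotics decomposition $U=\bar U+c\chi\circ y$, which must be tracked through $L$ and $M$ but does not affect the measure-theoretic core.
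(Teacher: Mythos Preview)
Your proposal is correct and follows precisely the route the paper takes: the paper offers no independent argument for this theorem but simply refers to \cite[Theorem~3.12]{HR}, and your sketch is an accurate summary of that proof, including the identification of the push-forward computation $y_\#(h\,d\xi)=\mu$ as the core measure-theoretic step and the use of the $\F_0$ representative (where $y+H=\id$) to handle $L\circ M$. Your closing remark that the nonvanishing-asymptotics decomposition $U=\bar U+c\chi\circ y$ is the only new bookkeeping is exactly the point of the present paper's treatment.
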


\section{Continuous semigroup of solutions on $\D$}

For each $t\in\Real$, we define the mapping $T_t$
from $\D$ to $\D$ as
\begin{equation}\label{eq:Tt}
T_t=M\tilde S_tL.
\end{equation}
It corresponds to the following diagram:
\begin{equation*}
\xymatrix{ \D&\quot\ar[l]_M\\
\D\ar[u]^{T_t}\ar[r]^L&\quot\ar[u]_{\tilde S_t}}
\end{equation*}

We define global weak conservative solution to the
Camassa-Holm equation as follows.
\begin{definition}
  \label{eq:defweakconssol}
  Assume that $u\colon[0,\infty)\times\Real \to \Real$  satisfies \\
  (i) $u\in L^\infty_{\rm{loc}}([0,\infty), H_\infty(\Real))$, \\
  (ii) the equations
  \begin{multline}\label{eq:weak1}
    \iint_{[0,\infty)\times\Real}\Big[
    -u(t,x)\phi_t(t,x)
    +\big(u(t,x)u_x(t,x)+P_x(t,x)\big)\phi(t,x)\Big]dxdt\\
    =\int_\Real u(0,x)\phi(x,0)dx
  \end{multline}
  and
  \begin{equation}\label{eq:weak2}
    \iint_{[0,\infty)\times\Real} \Big[(P(t,x)-u^2(t,x)-\frac{1}{2}u_x^2(t,x))\phi(t,x)+P_x(t,x)\phi_x(t,x)\Big]dxdt=0,
  \end{equation}
  hold for all $\phi\in
  C^\infty_0([0,\infty)\times\Real)$. Then we say that
  $u$ is a weak global solution of the
  Camassa--Holm equation. If $u$ in addition satisfies
  \begin{equation*}
    (u^2+u_x^2)_t+(u(u^2+u_x^2))_x-(u^3-2Pu)_x=0
  \end{equation*}
  in the sense that
  \begin{multline}\label{eq:weak3}
    \iint_{(0,\infty)\times\Real}\Big[
    (u^2(t,x)+u_x^2(t,x))\phi_t(t,x)+(u(t,x)(u^2(t,x)+u_x^2(t,x)))\phi_x(t,x)\\-(u^3(t,x)-2P(t,x)u(t,x))\phi_x(t,x)\Big]dxdt
    =0,
  \end{multline}
  for any $\phi\in
  C_0^{\infty}((0,\infty)\times\Real)$, we say that $u$
  is a weak global conservative solution of the
  Camassa--Holm equation.
\end{definition}

On $\D$ we define the
distance $d_\D$ which makes the bijection $L$
between $\D$ and $\quot$ into an isometry:
\begin{equation*}
  d_\D((u,\mu),(\bar u,\bar \mu))=d_\quot(L(u,\mu),L(\bar u,\bar \mu)).
\end{equation*}
Since $\quot$ equipped with $d_\quot$ is a
complete metric space, the space $\D$ equipped
with the metric $d_D$ is a complete metric
space. Our main theorem then reads as follows.
\begin{theorem}
 \label{th:main}
 The semigroup $(T_t, d_\D)$ is a continuous
 semigroup on $\D$ with respect to the metric
 $d_\D$. Moreover, given any intial condition
 $(u_0,\mu_0)\in\D$, we denote
 $(u,\mu)(t)=T_t(u_0,\mu_0)$. Then $u(t,x)$ is a
 weak global conservative solution of the
 Camassa--Holm equation. Moreover, letting
 $\nu=u^2\,dx+\mu$, we have
 \begin{equation*}
   \nu_t+(u\nu)_x-(u^3-2Pu)_x=0
 \end{equation*}
 in the sense of distribution, that is,
 \begin{multline}\label{eq:weak4}
   \iint_{[0,\infty)\times\Real}(\phi_t(t,x)+u(t,x)\phi_x(t,x))d\nu(t,x)dt\\
   -\iint_{[0,\infty)\times\Real}(u^3(t,x)-2P(t,x)u(t,x))\phi_x(t,x)dxdt
   =-\int_\Real \phi(0,x)d\nu(0,x)dx.
 \end{multline}

\end{theorem}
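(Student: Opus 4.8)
The plan is to treat the three assertions in turn, deducing the semigroup property and the continuity from the factorization \eqref{eq:Tt}, and obtaining the two weak formulations by transporting every integral to Lagrangian coordinates. For the semigroup property, since $L\circ M=\id_\quot$ by Theorem~\ref{th:LMinv} and $\tilde S_t$ is a semigroup by Theorem~\ref{th:sgS}, I would simply compute
\[
T_{t+s}=M\tilde S_{t+s}L=M\tilde S_t\tilde S_sL=M\tilde S_t(LM)\tilde S_sL=(M\tilde S_tL)(M\tilde S_sL)=T_t\circ T_s.
\]
For continuity with respect to $d_\D$, recall that $L$ is by definition an isometry from $(\D,d_\D)$ onto $(\quot,d_\quot)$ and that $M=L\inv$ is therefore also an isometry; since $\tilde S_t$ is a continuous semigroup on $\quot$, the composition $T_t=M\tilde S_tL$ inherits continuity, which disposes of the first claim.

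For the weak formulation I would fix $(u_0,\mu_0)\in\D$, pick the representative $X(t)=(y,U,h)(t)=S_tL(u_0,\mu_0)$ of $\tilde S_tL(u_0,\mu_0)$, and recover $(u,\mu)(t)=M(\tilde S_tL(u_0,\mu_0))$, so that $u(t,y(t,\xi))=U(t,\xi)$ and $\mu=y_\#(h\,d\xi)$. Writing $\psi(t,\xi)=\phi(t,y(t,\xi))$ for a test function $\phi$, the relation $y_t=U$ from \eqref{eq:govsys1} gives $\psi_t=\phi_t+U\phi_x$ evaluated at $(t,y(t,\xi))$, while $\psi_\xi=\phi_x y_\xi$; because $X(t)$ is $C^1$ in $E$ by Theorem~\ref{th:global} and $\phi$ has compact support, the time differentiations and the integrations by parts in $\xi$ below are licit, the boundary terms at $\xi=\pm\infty$ vanishing since $y$ is proper. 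To obtain \eqref{eq:weak1} I would differentiate $t\mapsto\int_\Real U\psi y_\xi\,d\xi$, substitute $U_t=-Q$ and $y_{\xi,t}=U_\xi$ together with $P_\xi=Qy_\xi$ from \eqref{eq:derP}, integrate in $t$ over $[0,\infty)$, and integrate the $UU_\xi\psi$ contribution by parts in $\xi$; changing variables back to $x$ produces precisely \eqref{eq:weak1}, the initial datum $\int_\Real u_0\phi(0,\cdot)\,dx$ appearing as the boundary term at $t=0$. Relation \eqref{eq:weak2} is the weak form of the Helmholtz identity $P-P_{xx}=u^2+\tfrac12u_x^2$ and is built into the definition \eqref{eq:defP}; it holds because for almost every $t$ the singular part of $\mu$ is absent, so $\mu=u_x^2\,dx$ is the correct density, which is where I would invoke \eqref{eq:posyxi}.

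Finally, for the conservation law \eqref{eq:weak4} (and hence \eqref{eq:weak3}), observe that in Lagrangian coordinates the measure $\nu=u^2\,dx+\mu$ corresponds to the density $(U^2y_\xi+h)\,d\xi$, so that $\iint(\phi_t+u\phi_x)\,d\nu\,dt=\iint\psi_t(U^2y_\xi+h)\,d\xi\,dt$. I would then differentiate $t\mapsto\int_\Real(U^2y_\xi+h)\psi\,d\xi$ and insert $U_t=-Q$, $y_{\xi,t}=U_\xi$ and $h_t=2(U^2-P)U_\xi$ from \eqref{eq:govsys3}, using $P_\xi=Qy_\xi$ once more, which yields $\tfrac{d}{dt}(U^2y_\xi+h)=-2UP_\xi+(3U^2-2P)U_\xi$. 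An integration by parts in $\xi$ then identifies the remaining terms with $\iint(U^3-2PU)\psi_\xi\,d\xi\,dt=\iint(u^3-2Pu)\phi_x\,dx\,dt$, while the boundary term at $t=0$ supplies $-\int_\Real\phi(0,\cdot)\,d\nu(0,\cdot)$; collecting the pieces gives exactly \eqref{eq:weak4}.

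I expect the main obstacle to be the rigorous justification of the change of variables $x=y(t,\xi)$ where $y_\xi$ may vanish on sets of positive measure, corresponding to concentration of energy. The resolution is to argue at almost every time $t$, where \eqref{eq:posyxi} guarantees $y_\xi(t,\cdot)>0$ almost everywhere; there $u_x\circ y=U_\xi/y_\xi$ is well defined, $h=U_\xi^2/y_\xi=(u_x^2\circ y)y_\xi$, and $\mu$ is purely absolutely continuous, while the weak formulations, being integrals in $t$, are insensitive to the exceptional null set of times. A secondary point is that the reconstructed $u$ must be independent of the chosen representative, which follows from the $\Gr$-equivariance of $S_t$ in Theorem~\ref{th:sgS} and the relabeling invariance built into $M$; this step, as for \eqref{eq:weak1}--\eqref{eq:weak4} as a whole, parallels the corresponding arguments in \cite{HR}.
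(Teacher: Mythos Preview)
Your proposal is correct and follows essentially the same approach as the paper: the semigroup property via $L\circ M=\id_\quot$, continuity via the isometry $L$, and the weak formulations via the change of variables $x=y(t,\xi)$ together with the ODEs \eqref{eq:govsys}, \eqref{eq:derP} and integration by parts. The only cosmetic difference is that the paper verifies \eqref{eq:weak4} by treating the $u^2\,dx$ and $\mu$ pieces of $\nu$ separately and then adding, whereas you handle the combined Lagrangian density $U^2y_\xi+h$ in one stroke via the identity $\tfrac{d}{dt}(U^2y_\xi+h)=(U^3-2PU)_\xi$; both routes are the same computation.
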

\begin{proof}
First we prove that $T_t$ is a semigroup. Since $\tilde S_t$ is a mapping from $\F_0$ to $\F_0$, we have  
\begin{equation*}
 T_tT_{t'}=M\tilde S_tLM\tilde S_{t'}L=M\tilde S_t\tilde S_{t'}L=M\tilde S_{t+t'}L=T_{t+t'}, 
\end{equation*}
where we used \eqref{eq:Tt} and the semigroup
property of $\tilde S_t$.  To show that $u(t,x)$
is a weak global solutions, we have to show that
\eqref{eq:weak1} and \eqref{eq:weak2} are
satisfied. The proof of \eqref{eq:weak1} and \eqref{eq:weak2} is
essentially the same as in \cite{HR}.  Let us
check that \eqref{eq:weak4} is fullfilled.  After
making the change of variable $x=y(t,\xi)$ we
obtain
\begin{align*}
 \iint_{[0,\infty)\times\Real}& u^2(t,x)\phi_t(t,x)dxdt
= \iint_{[0,\infty)\times\Real} U^2(t,\xi)\phi_t(t,y(t,\xi))y_\xi(t,\xi)d\xi dt\\
& = \iint_{[0,\infty)\times\Real} U^2(t,\xi)\Big( (\phi(t,y(t,\xi)))_t-\phi_x(t,y(t,\xi))y_t(t,\xi)\Big)y_\xi(t,\xi)d\xi dt\\
& = -\int_\Real U^2(0,\xi)\phi(0,y(0,\xi))d\xi -\iint_{[0,\infty)\times\Real} U^3(t,\xi)\phi_\xi(t,y(t,\xi))d\xi dt\\
& \quad +\iint_{[0,\infty)\times\Real} \Big(2U(t,\xi)Q(t,\xi)y_\xi(t,\xi)-U^2(t,\xi)U_\xi(t,\xi)\Big)\phi(t, y(t,\xi))d\xi dt\\
& =- \int_\Real u^2(0,x)\phi(0,x)dx\\
&  \quad +\iint_{[0,\infty)\times\Real}\Big(2u(t,x)P_x(t,x)-u^2(t,x)u_x(t,x)\Big)\phi(t,x)-u^3(t,x)\phi_x(t,x)dxdt,
\end{align*}
and 
\begin{align*}
 \iint_{[0,\infty)\times\Real}& \phi_t(t,x)d\mu(t,x) dt
=\iint_{[0,\infty)\times\Real} \phi_t(t,y(t,\xi))h(t,\xi)d\xi dt\\
& = \iint_{[0,\infty)\times\Real} \Big( (\phi(t,y(t,\xi)))_t-\phi_x(t,y(t,\xi))y_t(t,\xi)\Big)h(t,\xi)d\xi dt\\
& = -\int_{\Real}\phi(0,y(0,\xi))h(0,\xi)d\xi \\
&\quad -\iint_{[0,\infty)\times\Real} h_t(t,\xi)\phi(t,y(t,\xi))+U(t,\xi)h(t,\xi)\phi_x(t,y(t,\xi))d\xi dt\\
& =- \int_{\Real}\phi(0,y(0,\xi))h(0,\xi)d\xi \\
&\quad -\iint_{[0,\infty)\times\Real} 2(U^2(t,\xi)-P(t,\xi))U_\xi(t,\xi)\phi(t,y(t,\xi))+U(t,\xi)h(t,\xi)\phi_x(t,y(t,\xi))d\xi dt\\
& = -\int_{\Real}\phi(0,x)d\mu(0,x)dx-\iint_{[0,\infty)\times\Real}\phi_x(t,x)u(t,x)d\mu(t,x) dt\\
& \quad  -\iint_{[0,\infty)\times\Real} 2(u^2(t,x)-P(t,x))u_x(t,x)\phi(t,x) dxdt.
\end{align*}
This finishes the proof. Since for almost every
$t\in[0,T]$, $y_\xi(t,\xi)>0$ for almost every
$\xi\in\Real$, see \eqref{eq:posyxi}, the property
\eqref{eq:weak3} follows from \eqref{eq:weak4}.
\end{proof} 

\section{Invariance of the topology with respect
  to the choice of the partition function}\label{top1}

The mappings $L$ and $M$ depend on the choice of
the partition function $\chi$. To emphasize this
dependence, we write $L_{\chi}$ and $M_{\chi}$. In
this section we prove that different choices of
the partition function $\chi$ lead to the same
topology in $\D$. Given two partition functions
$\chi$ and $\tilde\chi$, we obtain two topologies
\begin{equation}
  \label{eq:defdD}
  d_\D((u,\mu),(\bar u,\bar \mu))=\norm{L(u,\mu)-L(\bar u,\bar \mu)}_{E_{\chi}},
\end{equation}
and
\begin{equation}
  \label{eq:defdDt}
  \tilde d_\D((u,\mu),(\bar u,\bar \mu))=\norm{L(u,\mu)-L(\bar u,\bar \mu)}_{E_{\tilde\chi}}.
\end{equation}
In \eqref{eq:defdD} and \eqref{eq:defdDt}, we add
the subscripts $\chi$ and $\tilde\chi$ to indicate
which norm is used on $E$.

\begin{theorem}
  \label{th:sametop}
  We consider two partition functions $\chi$ and
  $\tilde\chi$. Then, the metric they induce on
  $\D$ are equivalent, that is, there exists a
  constant $C>0$ which only depends on $\chi$ and
  $\tilde\chi$ such that
  \begin{equation*}
    \frac1{C} 
    \tilde d_D((\bar u,\bar\mu),(u,\mu))\leq d_D((\bar u,\bar \mu),(u,\mu))\leq C \tilde
    d_D((\bar u,\bar\mu),(u,\mu))
  \end{equation*}
  for any $(\bar u,\bar\mu)$ and $(u,\mu)$ in
  $\D$.
\end{theorem}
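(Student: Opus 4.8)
The plan is to reduce the assertion to the equivalence of the two norms $\norm{\cdot}_{E_\chi}$ and $\norm{\cdot}_{E_{\tilde\chi}}$ evaluated on one and the same difference vector. The defining formulas \eqref{eq:Ldef} for $L$ do not involve the partition function, so for a given $(u,\mu)\in\D$ the maps $L_\chi$ and $L_{\tilde\chi}$ return the \emph{same} triple $(y,U,h)\in\F_0$; the dependence on the partition function enters only through the way the component $U\in H_{0,\infty}(\Real)$ is coordinatized, that is, through the norm. Since $\Gamma$ restricts to the identity on $\F_0$, we have $d_\quot([X],[X'])=\norm{X-X'}_E$, and hence, writing $\Delta=L(u,\mu)-L(\bar u,\bar\mu)\in E$ for the difference computed in $E$, the claim is precisely that $\frac1C\norm{\Delta}_{E_{\tilde\chi}}\le\norm{\Delta}_{E_\chi}\le C\norm{\Delta}_{E_{\tilde\chi}}$ for every $\Delta$ arising in this way.

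Next I would isolate the only $\chi$-dependent contribution to the $E$-norm. Writing $\Delta=(\zeta,W,k)$ and letting $c$ be the (partition-independent) asymptotic value of $W\in H_{0,\infty}(\Real)$ at $+\infty$, the definition of the $E$-norm reads $\norm{\Delta}_{E_\chi}=\norm{\zeta}_V+\norm{W-c\chi}_{H^1(\Real)}+|c|+\norm{k}_{L^2}$, and likewise with $\tilde\chi$ in place of $\chi$. The $\zeta$-, $|c|$- and $k$-contributions are identical in the two norms, so the entire discrepancy between $\norm{\Delta}_{E_\chi}$ and $\norm{\Delta}_{E_{\tilde\chi}}$ sits in the $H^1(\Real)$-term, where $(W-c\tilde\chi)-(W-c\chi)=c(\chi-\tilde\chi)$.

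The uniformity of the constant now comes from an elementary observation. Both $\chi$ and $\tilde\chi$ vanish on $(-\infty,0]$ and equal $1$ on $[1,\infty)$, so $\chi-\tilde\chi$ is smooth with support in $[0,1]$; in particular it belongs to $H^1(\Real)$ and $K:=\norm{\chi-\tilde\chi}_{H^1(\Real)}$ is a fixed constant depending only on $\chi$ and $\tilde\chi$. Therefore $\norm{W-c\tilde\chi}_{H^1}\le\norm{W-c\chi}_{H^1}+|c|\,K$, and since $|c|\le\norm{\Delta}_{E_\chi}$ this gives $\norm{\Delta}_{E_{\tilde\chi}}\le(1+K)\norm{\Delta}_{E_\chi}$; exchanging the roles of $\chi$ and $\tilde\chi$ yields the reverse inequality with the same $K$. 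Taking $C=1+K$ completes the proof. This is really the quantitative form of the argument of Section~\ref{sec:euler}: the map $\Psi$ of \eqref{eq:defPsi} is precisely the addition of $c(\chi-\tilde\chi)$.

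The one point that genuinely requires care — and which I would flag as the main obstacle — is keeping the constant independent of the two base points. This works only because the discrepancy between the two coordinatizations of $W$ is the \emph{fixed} function $c(\chi-\tilde\chi)$, depending on the data solely through the single scalar $c$ (the difference of the asymptotic values of the two solutions), which is itself bounded by the metric. In particular no estimate on $\meas\{\xi\mid y(\xi)\in[0,1]\}$ or on the total energy $\mu(\Real)$ — neither of which is uniform over $\D$ — is ever needed; had the norm been built from the composition $\chi\circ y$ rather than from $\chi$, such a non-uniform quantity would have entered and the argument would break down.
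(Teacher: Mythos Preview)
Your proof is correct and is essentially the same as the paper's. The paper invokes the bounded linear bijection $\Psi$ of \eqref{eq:defPsi} and the identity $I_{\chi}=I_{\tilde\chi}\circ\Psi$ to pass between $\norm{I_{\tilde\chi}^{-1}(U-\bar U)}_{H^1\times\Real}$ and $\norm{I_{\chi}^{-1}(U-\bar U)}_{H^1\times\Real}$, whereas you unpack $\Psi$ explicitly as the addition of $c(\chi-\tilde\chi)$ and bound this single term by $|c|\,\norm{\chi-\tilde\chi}_{H^1}$; you yourself note the equivalence in your last paragraph.
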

\begin{proof}
  Let $(y,U,h)=L(u,\mu)$ and $(\bar y,\bar U,\bar
  h)=L(\bar u,\bar\mu)$. We have
  \begin{align*}
    \norm{L(u,\mu)-L(\bar
      u,\bar\mu)}_{E_{\bar\chi}}&=\norm{(\zeta,I_{\bar\chi}^{-1}(U),h)-(\bar\zeta,I_{\bar\chi}^{-1}(\bar
      U),\bar h)}_{\bar E}\\
    &=\norm{\zeta-\bar\zeta}_{V}+\norm{I_{\bar\chi}^{-1}(U-\bar
      U)}_{H^1(\Real)\times\Real}+\norm{h-\bar h}_{L^2}\\
    &\leq\norm{\zeta-\bar\zeta}_{V}+C\norm{\Psi^{-1}I_{\bar\chi}^{-1}(U-\bar
      U)}_{H^1(\Real)\times\Real}+\norm{h-\bar h}_{L^2},
  \end{align*}
  see \eqref{eq:defPsi} for the definition of
  $\Psi$. The linear mapping $\Psi$ is continuous
  and $C$ is its operator norm, which only depends
  on $\chi$ and $\bar\chi$. Hence, since
  $I_{\chi}=I_{\bar\chi}\circ\Psi$,
  \begin{align*}
    \norm{L(u,\mu)-L(\bar u,\bar\mu)}_{E_{\bar\chi}}
    &\leq\norm{\zeta-\bar\zeta}_{V}+C\norm{I_{\chi}^{-1}(U-\bar
      U)}_{H^1(\Real)\times\Real}+\norm{h-\bar h}_{L^2}\\
    &\leq C
    \norm{(\zeta,I_{\chi}^{-1}(U),h)-(\bar\zeta,I_{\chi}^{-1}(\bar
      U),\bar h)}_{\bar E}\\
    &=C\norm{L(u,\mu)-L(\bar u,\bar\mu)}_{E_{
        \chi}}.
  \end{align*}
\end{proof}

The metric $d_\D$ on $\D$ gives the structure of a
complete metric space while it makes the semigroup
$T_t$ of conservative solutions continuous for the
Camassa--Holm equation. In that respect, it is a
suitable metric for the Camassa--Holm
equation. The definition of $d_\D$ is not
straightforward but it can be compared with more
standard topologies. We have that the mapping
\begin{equation*}
  u\mapsto(u, u_x^2\,dx)
\end{equation*}
is continuous from $H_{0,\infty}(\Real)$ into
$\mathcal{\D}$. In other words, given a sequence
$u_n\in H_{0,\infty}(\Real)$ which converges to
$u$ in $H_{0,\infty}(\Real)$, then $(u_n,
u_{n,x}^2\,dx)$ converges to $(u, u_x^2\,dx)$ in
$\mathcal{\D}$.  See \cite[Proposition
5.1]{HR}. Conversely, let $(u_n,\mu_n)$ be a
sequence in $\D$ that converges to $(u,\mu)$ in
$\D$. Then
\begin{equation*}
  u_n\rightarrow u \text{ in } L^\infty(\Real) \quad  \text{ and }\quad  \mu_n \overset{\ast}{\rightharpoonup}\mu.
\end{equation*}
See \cite[Proposition 5.2]{HR}.

\section{Conservative solutions with vanishing asymptotics}

In this section we want to clarify the connection between the approach used here in the case $c_-=c_+=0$ and the one used in \cite{HR}, which also answers the questions why the proofs are quite similar and why we speak of conservative solutions.

\begin{theorem}
 Let $(u_0,\mu_0)$ be a pair of Eulerian coordinates as in Definition~\ref{def:euler}, and $(\tilde u_0,\tilde \mu_0)$ the pair of Eulerian coordinates as defined in \cite[Definition 3.1]{HR}, such that $u_0(x)=\tilde u_0(x)$ and such that 
\begin{equation*}
\tilde\mu_0((-\infty,x))-\mu_0((-\infty,x))=\int_{-\infty}^x u_0(x)^2 dx, \quad x\in\Real.
\end{equation*}
Then the solutions $(u(t), \mu_t)$ and $(\tilde u(t), \tilde\mu_t)$ satisfy $u(t,x)=\tilde u(t,x)$ and 
 \begin{equation*}
\tilde\mu_t((-\infty,x))-\mu_t((-\infty,x))=\int_{-\infty}^x u(t,x)^2 dx, \quad x\in\Real.
\end{equation*}
\end{theorem}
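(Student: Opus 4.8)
The plan is to carry the whole comparison into Lagrangian coordinates, where the only difference between the two theories is the choice of transported energy density. Throughout I work with $\kappa=0$ and in the stated regime $c_-=c_+=0$, so that $u_0\in H^1(\Real)$, the Lagrangian decomposition has $c=0$ and $\bar U=U$, and the $g$-terms in \eqref{eq:defP}--\eqref{eq:defQ} disappear. In this case $\mathcal P(X)$ collapses to $\frac14\int_\Real e^{-\abs{y(\xi)-y(\eta)}}(2U^2y_\xi+h)(\eta)\,d\eta$, and substituting the Holden--Raynaud density $\tilde h:=h+U^2y_\xi$ turns the integrand into $(U^2y_\xi+\tilde h)(\eta)$, which is exactly the functional used in \cite{HR}; the same holds for $\mathcal Q$. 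Note that the hypothesis on the data says precisely $\tilde\mu_0-\mu_0=u_0^2\,dx$.

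The second step is to fix a common Lagrangian lift. I set $(y_0,U_0,h_0)=L(u_0,\mu_0)\in\F_0$ and define $\tilde h_0=h_0+U_0^2y_{0,\xi}$. By \eqref{eq:energyxi} we have $y_{0,\xi}\tilde h_0=U_{0,\xi}^2+U_0^2y_{0,\xi}^2$ and $\tilde h_0\geq 0$, and since $u_0\in H^1(\Real)$ the increment $\int_\Real U_0^2y_{0,\xi}\,d\xi=\norm{u_0}_{L^2}^2$ is finite, so $(y_0,U_0,\tilde h_0)$ satisfies the constraints of the Lagrangian set of \cite{HR}; moreover $y_{0,\#}(\tilde h_0\,d\xi)=\mu_0+u_0^2\,dx=\tilde\mu_0$, so it is a (non-normalized) lift of $(\tilde u_0,\tilde\mu_0)$. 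Since the Holden--Raynaud semigroup descends to the quotient by relabeling, exactly as in Theorem~\ref{th:sgS} and Theorem~\ref{th:LMinv}, the Eulerian solution obtained from this lift coincides with the one obtained from the normalized representative, i.e.\ with $(\tilde u(t),\tilde\mu_t)$.

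Next I run this paper's flow, $(y(t),U(t),h(t))=S_t(y_0,U_0,h_0)$, and set $\tilde h(t)=h(t)+U(t)^2y_\xi(t)$. The heart of the argument is to check that $(y,U,\tilde h)$ solves the \cite{HR} system with the \emph{same} $y$ and $U$. Because $\mathcal P,\mathcal Q$ agree once $\tilde h=h+U^2y_\xi$ (first step) and this relation is maintained by definition, only the energy equation must be verified. Differentiating and using \eqref{eq:govsys2}, \eqref{eq:govsys3}, \eqref{eq:govsysder1} and $P_\xi=Qy_\xi$ from \eqref{eq:derP},
\begin{equation*}
\tilde h_t=h_t+2UU_ty_\xi+U^2y_{\xi,t}=3U^2U_\xi-2PU_\xi-2UQy_\xi=\partial_\xi(U^3-2PU),
\end{equation*}
which is the Lagrangian form of the conservation law $(u^2+u_x^2)_t+(u(u^2+u_x^2))_x=(u^3-2Pu)_x$ governing the conservative solutions of \cite{HR}. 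By uniqueness for the Lagrangian system, $(y,U,\tilde h)$ is the Holden--Raynaud solution issued from $(y_0,U_0,\tilde h_0)$; in particular the two flows share $y$ and $U$, whence $u(t,x)=\tilde u(t,x)$ for all $t$.

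Finally, pushing forward by $y(t)$ (which is nondecreasing, $y_\xi\geq 0$) and using the change of variables $x=y(t,\xi)$,
\begin{equation*}
\tilde\mu_t-\mu_t=y(t)_{\#}\big((\tilde h(t)-h(t))\,d\xi\big)=y(t)_{\#}\big(U(t)^2y_\xi(t)\,d\xi\big)=u(t,\cdot)^2\,dx,
\end{equation*}
which is the claimed identity in cumulative form. I expect the main obstacle to be the second step: the transformations $L$ for the two theories use different energy-arclength normalizations, so the normalized representatives in $\F_0$ genuinely differ, and one must argue through relabeling invariance that the common lift $(y_0,U_0,\tilde h_0)$ indeed computes the solution of \cite{HR}. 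Once this identification is secured, preservation of $\tilde h=h+U^2y_\xi$ and the agreement of $\mathcal P$ and $\mathcal Q$ are immediate from the displayed computations.
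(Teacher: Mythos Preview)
Your proposal is correct and follows essentially the same route as the paper: pass to Lagrangian coordinates, set $\tilde h=h+U^2y_\xi$ (the paper uses the integrated form $\hat H=\int_{-\infty}^\xi(h+U^2y_\xi)\,d\eta$), verify that $(y,U,\tilde h)$ then solves the \cite{HR} system with the same $P,Q$, and invoke relabeling invariance to reconcile the different normalizations. The only cosmetic difference is that the paper writes down the explicit relabeling $f(\xi)=\xi+\int_{-\infty}^\xi U_0^2y_{0,\xi}\,d\eta$ connecting the $\F_0$ representative here to the one in \cite{HR}, whereas you handle this step abstractly via $G$-equivariance; your identification of this as the delicate point is accurate, and the explicit $f$ is precisely what settles it.
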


\begin{proof}
Let $(u_0,\mu_0)$ be a pair of Eulerian coordinates as defined in Definition~\ref{def:euler}, and $(u_0,\tilde\mu_0)$ the set of Eulerian coordinates defined as in \cite[Definition 3.1]{HR}, such that 
\begin{equation*}
\tilde\mu_0((-\infty,x))-\mu_0((-\infty,x))=\int_{-\infty}^x u_0(x)^2 dx, \quad x\in\Real.
\end{equation*}
Moreover, let $(y,U,h)$ be the Lagrangian coordinates to $(u,\mu)$ and define 
\begin{equation*}
 \hat H(t,\xi)=\int_{-\infty}^\xi h+U^2y_\xi d\xi.
\end{equation*}
Then by construction
(cf.\ Definition~\ref{th:Ldef}), we have 
\begin{equation*}
 y(0,\xi)+\hat H(0,\xi)=\int_{-\infty}^\xi U^2y_\xi(0,\xi) d\xi+\xi.
\end{equation*}
The right-hand side belongs to $\G$, the set of
relabeling functions, which coincides with the one
used in \cite{HR}. Thus let
$f(\xi)=\int_{-\infty}^\xi U^2y_\xi (0,\xi)d\xi+\xi$.
 Then we have for almost every $\xi\in\Real$, that
\begin{equation*}
 y_0(\xi)+\mu_0((-\infty,y_0(\xi)))=\xi,
\end{equation*}
which implies 
\begin{equation*}
 y_0(\xi)+\tilde\mu_0((-\infty,y_0(\xi)))=f(\xi).
\end{equation*}
 Thus setting $\tilde y_0(\xi)=y_0(f^{-1}(\xi))$ yields 
\begin{equation*}
 \tilde y_0(\xi)+\tilde\mu_0((-\infty,\tilde y_0(\xi)))=\xi.
\end{equation*}
Moreover, one can conclude that 
\begin{equation*}
 \tilde y_0(\xi)=y_0(f^{-1}(\xi)), \quad \text{ for all } x\in\Real,
\end{equation*}
by using Definition~\ref{th:Ldef} and \cite[Theorem 3.8]{HR}.
Analogously one can proceed for the other involved functions so that $X_0\circ f^{-1}=\tilde X_0$.

Furthermore, from \eqref{eq:govsys} we can conclude that the variables $(y,U,H)$ satisfy
 \begin{subequations}
    \begin{align}
    y_t&=U,\\
    U_t&=-Q,\\
   \hat H_t&=U^3-2PU,
  \end{align}
\end{subequations}
which coincides with the system of ordinary differential equations for the
Lagrangian coordinates considered in \cite{HR}.
In addition we know from \cite[Theorem 3.7]{HR}
that $S_t(X\circ f)=S_t(X)\circ f$, and therefore
using that the mapping from Lagrangian to Eulerian
coordinates (cf.~\cite[Theorem 3.11]{HR}) is
independent of the element of the equivalence
class we choose, we obtain that the pairs
$(u_0,\mu_0)$ and $(u_0,\tilde\mu_0)$ with $\tilde
\mu_0((-\infty,x))-\mu_0((-\infty,x))=\int_{-\infty^x}u^2_0(x)dx$,
give rise to the same conservative solution.
\end{proof}

\end{document}